\DeclareFontFamily{U}{mathx}{\hyphenchar\font45}
\DeclareFontShape{U}{mathx}{m}{n}{
      <5> <6> <7> <8> <9> <10>
      <10.95> <12> <14.4> <17.28> <20.74> <24.88>
      mathx10
      }{}
\newtheorem{theorem}{Theorem}[section]
\newtheorem*{theoremA}{Theorem A}
\newtheorem*{theoremB}{Theorem B}
\newtheorem{lemma}[theorem]{Lemma}
\newtheorem{corollary}[theorem]{Corollary}
\newtheorem{proposition}[theorem]{Proposition}
\newtheorem{fact}[theorem]{Fact}
\theoremstyle{remark}
\newtheorem{remark}[theorem]{Remark}
\newtheorem{claim}[theorem]{Claim}
\theoremstyle{definition}
\newtheorem{definition}[theorem]{Definition}
\newtheorem{problem}[theorem]{Problem}
\numberwithin{equation}{section}
\newcommand{\nn}[1]{{\left\vert\kern-0.25ex\left\vert\kern-0.25ex\left\vert #1 
\right\vert\kern-0.25ex\right\vert\kern-0.25ex\right\vert}}
\newcommand{\flag}{\mathord{\upharpoonright}}
\renewcommand{\leq}{\leqslant}
\renewcommand{\geq}{\geqslant}
\DeclareMathOperator{\supp}{supp}
\DeclareMathOperator{\dens}{dens}
\DeclareMathOperator{\cf}{cf}
\newcounter{smallromans}
\newenvironment{romanenumerate}
{\begin{list}{{\normalfont\textrm{(\roman{smallromans})}}}%
  {\usecounter{smallromans}\setlength{\itemindent}{0cm}%
   \setlength{\leftmargin}{5.5ex}\setlength{\labelwidth}{5.5ex}%
   \setlength{\topsep}{.5ex}\setlength{\partopsep}{.5ex}%
   \setlength{\itemsep}{0.1ex}}}%
{\end{list}}
\newcommand{\R}{\mathbb{R}}
\newcommand{\e}{\varepsilon}
\newcounter{smallromansdash}
\newcounter{bigromans} 
{\end{list}}
\begin{document}
\title[Overcomplete sets in non-separable Banach spaces]{Overcomplete sets\\in non-separable Banach spaces}
\dedicatory{In memoriam: Paolo Terenzi (1940--2017)}

\author[T.~Russo]{Tommaso Russo}
\address[T.~Russo]{Department of Mathematics\\Faculty of Electrical Engineering\\Czech Technical University in Prague\\Technick\'a 2, 166 27 Praha 6\\ Czech Republic}
\email{russotom@fel.cvut.cz}

\author[J.~Somaglia]{Jacopo Somaglia}
\address[J.~Somaglia]{Dipartimento di Matematica ``F. Enriques'' \\Universit\`a degli Studi di Milano\\Via Cesare Saldini 50, 20133 Milano\\Italy}
\email{jacopo.somaglia@unimi.it}

\thanks{Part of the research presented in this paper was carried out during the second author's visit to the Institute of Mathematics of the Czech Academy of Sciences. We acknowledge with thanks GA\v{C}R project 17-27844S; RVO 67985840 for supporting the research stay. Research of the first-named author was supported  by the project International Mobility of Researchers in CTU CZ.02.2.69/0.0/0.0/16$\_$027/0008465. Research of the second-named author was supported  by Universit\`a degli Studi di Milano, Research Support Plan 2019. Finally, both authors were also supported by Gruppo Nazionale per l'Analisi Matematica, la Probabilit\`a e le loro Applicazioni (GNAMPA) of Istituto Nazionale di Alta Matematica (INdAM), Italy.}

\keywords{Overcomplete sequence, overcomplete set, WLD Banach space, Marku\v{s}evi\v{c} basis}
\subjclass[2010]{46B20, 46B50 (primary), and 46A35, 46B26 (secondary).}
\date{\today}

\begin{abstract} We introduce and study the notion of overcomplete set in a Banach space, that subsumes and extends the classical concept of overcomplete sequence in a (separable) Banach space. We give existence and non-existence results of overcomplete sets for a wide class of (non-separable) Banach spaces and we study to which extent properties of overcomplete sequences are retained by every overcomplete set.
\end{abstract}
\maketitle

\section{Introduction}
The notion of \emph{overcomplete} sequence originated in a 1958 paper by Victor Klee \cite{Klee} (let us also refer to \cite[p.~283]{BePe} for some historical comments), where the author constructs, in every separable Banach space, a sequence whose every subsequence is linearly dense. This well-known construction is such a sparkling gem that we shall briefly outline it here. Given a real scalar $\lambda\in(0,1)$, consider the geometric vector
$$g_\lambda:=(1,\lambda,\lambda^2,\lambda^3,\dots)\in\ell_1.$$
A consequence of the Hahn-Banach theorem and the identity principle for analytic functions is that the set $\{g_\lambda\colon \lambda\in J\}$ is linearly dense in $\ell_1$, whenever $J$ is an infinite subset of $(0,1/2)$. Indeed, if $x=(x(j))_{j<\omega}\in\ell_\infty$ is a functional that vanishes on each $g_\lambda$ ($\lambda\in J$), where $J\subseteq (0,1/2)$ is infinite, then $\sum_{j=0}^\infty x(j)\cdot \lambda^j=0$, for every $\lambda\in J$. In other words, the analytic function $\lambda\mapsto \sum_{j=0}^\infty x(j)\cdot \lambda^j$ has infinitely many zeros on $[0,1/2]$, hence it is identically equal to zero, by the identity principle. It follows that $x=0$, whence $\{g_\lambda\colon \lambda\in J\}$ is linearly dense, as required. Consequently, overcomplete sequences exist in the Banach space $\ell_1$. 

In order to construct an overcomplete sequence in an arbitrary separable Banach space, it is then sufficient to remind that every separable Banach space is a quotient of $\ell_1$ and observe that overcomplete sequences are preserved under quotient maps. (For more details the reader might consult \cite[p.~58]{SingerII} or \cite[p.~113]{Milman}.)\smallskip

Such sequences have been studied by several authors \cite{CFP, T1,T2,T3,T4}, in particular in connection with the existence of basic sequences. It is, indeed, plain that an overcomplete sequence cannot admit any basic subsequence (or, more generally, any minimal subsequence). On the other hand, Terenzi \cite{T4} found a surprising dichotomy implying that, in a sense, being overcomplete and being basic are complementary notions.\smallskip

More recently \cite{FonfZanco}, the dual notion of \emph{overtotal} sequence has been introduced and both notions have been generalised, leading to the definitions of \emph{almost overcomplete} and \emph{almost overtotal} sequences (see also \cite{FSTZ}). Perhaps the main result in \cite{FonfZanco}, that will also play a cardinal r\^{o}le in our note, is the fact that every bounded almost overcomplete sequence, as well as every bounded almost overtotal sequence, is relatively compact. In the same article, such a property has been used to provide a short and unified approach to some spaceability results, \cite{CSS,EGSS}.\smallskip

Our purpose in the present note is to push the theory of overcomplete sequences to its natural non-separable counterpart, by means of the introduction and study of the concept of overcomplete subset of a Banach space. Let us therefore formulate the following definition.
\begin{definition} A subset $S$ of a Banach space $X$, with $|S|=\dens\,X$, is said to be \emph{overcomplete} if every subset $\Lambda$ of $S$, with $|\Lambda|=|S|$, is linearly dense in $X$.
\end{definition}
It is, for example, plain that, in the separable context, any injective enumeration of an overcomplete set is an overcomplete sequence and, vice versa, the range of an overcomplete sequence is an overcomplete set.\smallskip

As we shall see, unlike the separable case, overcomplete sets may fail to exist in non-separable Banach spaces, particularly in Banach spaces with `big' density. As a sample of this phenomenon, to be investigated in Section \ref{S: existence}, let us offer here the following result.
\begin{theoremA} Let $X$ be a WLD Banach space. Then:
\begin{romanenumerate}
    \item {\rm (CH)} if $\dens\,X=\omega_1$, $X$ contains an overcomplete set;
    \item if $\dens\,X\geq\omega_2$, $X$ contains no overcomplete set.
\end{romanenumerate}
On the other hand, $\ell_1(\omega_1)$ contains no overcomplete set.
\end{theoremA}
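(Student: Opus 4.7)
The three assertions call for different strategies; in both non-existence parts the basic tool is the Hahn--Banach reformulation of overcompleteness: $S$ is overcomplete iff $|\{s\in S:f(s)=0\}|<|S|$ for every nonzero $f\in X^*$.

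For \emph{(i)}, I exploit the fact that a WLD space of density $\omega_1$ carries a Markuševič basis $\{e_\alpha,e_\alpha^*\}_{\alpha<\omega_1}$ for which each $x\in X$ has countable support. Dually, the map $x^*\mapsto(x^*(e_\alpha))_\alpha$ embeds $X^*$ into $\Sigma(\mathbb{R}^{\omega_1})$, and a routine cardinality estimate gives $|X^*|\leq\omega_1$ under CH. I would enumerate $X^*\setminus\{0\}$ as $\{f_\alpha\}_{\alpha<\omega_1}$ and build $v_\alpha\in X$ by transfinite recursion so that $v_\alpha\notin\{v_\beta:\beta<\alpha\}$ and $f_\beta(v_\alpha)\neq 0$ for all $\beta<\alpha$. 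At each stage the forbidden set is a countable set together with countably many proper closed hyperplanes $\ker f_\beta$, hence meager, and the Baire category theorem provides a valid $v_\alpha$. To verify overcompleteness, let $A\subseteq\omega_1$ be uncountable and $f=f_\beta\neq 0$; then $\{\alpha\in A:f(v_\alpha)=0\}\subseteq A\cap[0,\beta]$ is countable, so $\{v_\alpha\}_{\alpha\in A}\not\subseteq\ker f$ and is therefore linearly dense.

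For \emph{(ii)}, assume $S\subseteq X$ is overcomplete with $\kappa:=|S|=\dens X\geq\omega_2$, and fix a WLD Markuševič basis. For $\alpha<\kappa$ put $N(\alpha):=\{s\in S:\alpha\notin\supp s\}$; applying the Hahn--Banach criterion to $e_\alpha^*$ yields $|N(\alpha)|<\kappa$. A pigeonhole/cofinality argument on the cardinals $|N(\alpha)|$, using that $\kappa\geq\omega_2$ indices are distributed among cardinals strictly below $\kappa$, produces $\omega_1$ ordinals $\{\alpha_\beta\}_{\beta<\omega_1}$ together with a common bound $\mu<\kappa$ such that $|N(\alpha_\beta)|\leq\mu$ for every $\beta$. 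Then $\bigl|\bigcup_{\beta<\omega_1}N(\alpha_\beta)\bigr|\leq\omega_1\cdot\mu<\kappa$, so $S\setminus\bigcup_\beta N(\alpha_\beta)$ is non-empty; any $s$ in it has $\{\alpha_\beta\}_{\beta<\omega_1}\subseteq\supp s$, an uncountable set, contradicting the countable support property.

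For \emph{(iii)}, the balls $nB_{\ell_1(\omega_1)}$ cover $S$, so one of $S\cap nB_{\ell_1(\omega_1)}$ has size $\omega_1$ and is itself a bounded overcomplete set. I then invoke the natural set-theoretic extension of the Fonf--Zanco theorem recalled in the Introduction---presumably established elsewhere in the paper---asserting that every bounded overcomplete subset of a Banach space is relatively norm-compact. A norm-compact subset of $\ell_1(\omega_1)$ is norm-separable, so $\overline{\operatorname{span}}\,S$ would be separable, whereas overcompleteness forces $\overline{\operatorname{span}}\,S=\ell_1(\omega_1)$, which has density $\omega_1$, the desired contradiction. The main technical subtleties I foresee are the pigeonhole step in \emph{(ii)}, especially when $\kappa$ is singular or weakly inaccessible (requiring some cardinal bookkeeping beyond the successor case), and the availability of the set-version of Fonf--Zanco for part \emph{(iii)}.
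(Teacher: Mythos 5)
Your part (i) is essentially the paper's argument: under CH one gets $|X^*|=\omega_1$ (the paper via the injection of $X^*$ into $\ell_\infty^c(\omega_1)$, you via $\Sigma(\mathbb{R}^{\omega_1})$ --- same thing), enumerates the hyperplanes in type $\omega_1$, and runs the transfinite recursion so that each hyperplane misses all later points; the verification is identical. Nothing to add there.

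Part (ii) is correct but takes a genuinely different route. The paper fixes a fundamental biorthogonal system $\{x_\alpha;f_\alpha\}_{\alpha<\kappa}$, sends each $y\in S$ to its countable support, and applies Hajnal's free-set theorem to produce $H$ of size $\kappa$ and an index $\gamma$ with $S\cap\ker f_\gamma\supseteq\{y_\alpha\}_{\alpha\in H\setminus\{\gamma\}}$. You replace the free-set theorem by direct counting on the sets $N(\alpha)=S\cap\ker e_\alpha^*$, each of size $<\kappa$ by overcompleteness, and your pigeonhole does survive the cardinal bookkeeping you flag: if $\cf(\kappa)\geq\omega_2$, any $\omega_1$ indices have $\sup_\beta|N(\alpha_\beta)|<\kappa$; if $\cf(\kappa)\leq\omega_1$, writing $\kappa=\bigcup_{i<\cf(\kappa)}\{\alpha:|N(\alpha)|\leq\kappa_i\}$ along a cofinal sequence forces one piece to have size $\geq\omega_1$, since otherwise $\kappa\leq\cf(\kappa)\cdot\omega\leq\omega_1$. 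In either case $\omega_1\cdot\mu=\max(\omega_1,\mu)<\kappa$, the union of the chosen $N(\alpha_\beta)$ misses some $s\in S$, and that $s$ has uncountable support --- impossible. This is a legitimate and arguably more elementary substitute for Hajnal's theorem, with the same scope (any space of density $\geq\omega_2$ carrying a fundamental biorthogonal system, hence every WLD space).

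Part (iii) has a genuine gap. The ``set-theoretic extension of the Fonf--Zanco theorem'' you invoke --- every bounded overcomplete \emph{set} is relatively norm-compact --- is not proved anywhere in the paper and cannot be true: a relatively norm-compact set is separable, so its closed linear span is separable, and such a principle would therefore rule out bounded overcomplete sets in \emph{every} non-separable space, contradicting your own part (i) (under CH one can even place a $(1-\e)$-separated overcomplete set inside $B_X$). The paper explicitly remarks that no direct extension of the Fonf--Zanco compactness result is available in the non-separable setting. The actual argument for $\ell_1(\omega_1)$ is a transfinite sliding-hump: normalize $S$, set $N_\alpha=\inf\{\|x\flag_{[0,\alpha)}\|:x\in S\}$, observe that this stabilizes at some $N<1$, and extract a long sequence $(x_\gamma)_{\gamma<\omega_1}\subseteq S$ whose parts beyond a fixed $\alpha_0$ are (up to an $\e$-perturbation) disjointly supported with $\ell_1$-mass at least $1-N-\e$; such a family is equivalent to the canonical long basis of $\ell_1(\omega_1)$, hence basic, hence not overcomplete. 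Your reduction to a bounded subset via the balls $nB_{\ell_1(\omega_1)}$ is valid but does not help, because the compactness lemma you then lean on is false.
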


The proof of Theorem A will be given in Section \ref{S: existence}, where we also obtain some more precise results. In particular, we prove, under CH, that every Banach space $X$ with $\dens\,X=\dens\,X^*=\omega_1$ contains overcomplete sets; this result covers, \emph{e.g.}, the cases of WLD spaces of density $\omega_1$ and of $C(K)$-spaces, where $K$ is a scattered compact with weight $\omega_1$. On the other hand, we prove (in ZFC) that a Banach space $X$ with $\kappa :=\dens\,X$ does not admit overcomplete sets, provided $\cf(\kappa)\geq\mathfrak{c}^+$, or $\kappa\geq\omega_2$ and $X$ admits a fundamental biorthogonal system.

As it is perhaps clear, our results leave a large area for further investigation and suggest several natural questions. Let us explicitly record some of them here. Since every our existence result is obtained under CH, a natural question is whether there exists, in ZFC, a non-separable Banach space admitting an overcomplete set. Moreover, we could ask the following:

\begin{problem}
Assume CH.
\begin{romanenumerate}
\item Does $\ell_{\infty}$ admit an overcomplete set? (Compare with Corollary \ref{cor: negCH ell_infty})
\item Is there a Banach space of density $\omega_\omega$ and with an overcomplete set?
\end{romanenumerate}
\end{problem}

Subsequently, in Section \ref{S: properties}, we initiate the study of the properties of overcomplete sets in Banach spaces. Our main concern relies in detecting the correct non-separable counterpart of the result that bounded overcomplete sequences are relatively compact. Certainly, overcomplete sets in non-separable Banach spaces fail to be relatively compact, since compact sets are separable; consequently, a direct extension of the result from \cite{FonfZanco} is not available. Analogously, the existence of a relatively weakly compact overcomplete set in a Banach space $X$ implies that $X$ is WCG. This motivates the following strengthening of Theorem A(i), in the class of WCG Banach spaces.
\begin{theoremB}[CH] Every WCG Banach space $X$ with $\dens\,X=\omega_1$ contains a relatively weakly compact overcomplete set.
\end{theoremB}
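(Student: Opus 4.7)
The plan is to reduce Theorem~B to Theorem~A(i) by factoring $X$ through a reflexive Banach space via the Davis--Figiel--Johnson--Pe\l czy\'nski (DFJP) construction; relative weak compactness will then come for free from reflexivity of the intermediate space.

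First, using that $X$ is WCG, fix an absolutely convex, weakly compact $K \subseteq X$ with $\overline{\operatorname{span}}(K) = X$ (pass to the closed absolutely convex hull of a weakly compact generator and apply Krein's theorem). Apply the DFJP factorisation to $K$ to obtain a reflexive Banach space $Y$ and a bounded, injective linear operator $j\colon Y \to X$ with $K \subseteq j(B_Y)$; in particular, $j(Y)$ is norm-dense in $X$. The next, and main, task is to verify that $\dens Y = \omega_1$. The lower bound is routine: any $\|\cdot\|_Y$-dense subset $D \subseteq Y$ maps under the continuous $j$ to a subset $X$-dense in $j(Y)$, which is itself $X$-dense in $X$, so $|D| \geq \dens X = \omega_1$. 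The upper bound is where CH enters, via the standard estimate $|Z| \leq (\dens Z)^{\aleph_0}$ and the cardinal identity
\[
\omega_1^{\aleph_0} = (2^{\aleph_0})^{\aleph_0} = 2^{\aleph_0} = \omega_1 \quad \text{(under CH)},
\]
which gives $|X| \leq \omega_1$, hence $|Y| \leq \omega_1$ by injectivity of $j$, and finally $\dens Y \leq \omega_1$.

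With $Y$ a reflexive (hence WLD) Banach space of density $\omega_1$, Theorem~A(i) produces an overcomplete set $S \subseteq Y$. Replacing each $s \in S$ by $s/(1+\|s\|_Y)$---an injective map of $Y$ into its open unit ball, which preserves overcompleteness because scaling by nonzero scalars does not alter closed linear spans---I may assume $S \subseteq B_Y$. Set $S' := j(S)$. Injectivity of $j$ gives $|S'| = \omega_1 = \dens X$, and $S' \subseteq j(B_Y)$, with $j(B_Y)$ weakly compact in $X$ as the image of the weakly compact ball $B_Y$ under the weak-to-weak continuous map $j$. For overcompleteness of $S'$: any $\Lambda' \subseteq S'$ with $|\Lambda'| = \omega_1$ is of the form $j(\Lambda)$ for some $\Lambda \subseteq S$ with $|\Lambda| = \omega_1$; overcompleteness of $S$ in $Y$ makes $\Lambda$ linearly dense in $Y$, so continuity of $j$ together with density of $j(Y)$ in $X$ yields $\Lambda'$ linearly dense in $X$.

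The main obstacle is precisely the cardinality computation $\dens Y = \omega_1$: without CH, the DFJP space $Y$ could have density strictly greater than $\omega_1$, placing it outside the scope of Theorem~A(i). Beyond that, the rest amounts to routine transfer-along-$j$ arguments.
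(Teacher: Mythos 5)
Your argument is correct, but it takes a genuinely different route from the paper. The paper stays inside $X$: it takes a weakly compact generator $K$, passes to $C=\overline{\rm conv}(K)$ via Krein's theorem, and then invokes the refined form of its main existence result (Theorem \ref{T: existence under CH}), which produces an overcomplete set \emph{inside any prescribed set $C$ that is not covered by countably many hyperplanes}; the only work is a short Baire category argument in the complete metric space $C$ showing that this covering condition holds. You instead factor through the DFJP reflexive space $Y$, apply the bare existence result (Theorem A(i)) there, and push the overcomplete set forward along the injective, dense-range operator $j$. Both proofs use CH in essentially the same place (to get the hyperplanes of the relevant space enumerated in type $\omega_1$); your extra CH use, the computation $\dens Y\le|X|\le\omega_1^{\aleph_0}=\omega_1$, is harmless here since the whole statement is conditional on CH. What your approach buys is a clean transfer principle (overcomplete sets pass forward along bounded injections with dense range, after normalising into the unit ball) and the fact that only the unrefined existence theorem is needed; what it costs is the DFJP machinery, where the paper's route needs only Krein plus a two-line Baire argument. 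All the individual steps you sketch (injectivity and weak-to-weak continuity of $j$, weak compactness of $j(B_Y)$, preservation of closed linear spans under the pointwise rescaling $s\mapsto s/(1+\|s\|_Y)$, and the pullback of full-cardinality subsets) check out.
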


\section{Preliminaries}
Our notation concerning Banach spaces, which we only consider over the real field, is standard, as in \cite{ak, FHHMZ}. Let us just mention that by \emph{hyperplane} in a Banach space we understand a closed linear subspace of codimension one, namely, the kernel of a non-zero, bounded linear functional. We shall briefly remind here some basic definitions concerning non-separable Banach spaces, that can be found, \emph{e.g.}, in \cite{HMVZ}.

A \emph{biorthogonal system} in a Banach space $X$ is a system $\{x_\alpha;f_\alpha\} _{\alpha\in\Gamma}\subseteq X\times X^*$ such that $\langle f_\alpha, x_\beta\rangle =\delta_{\alpha,\beta}$, whenever $\alpha,\beta\in\Gamma$. A biorthogonal system  $\{x_\alpha;f_\alpha\}_{\alpha\in\Gamma}$ is \emph{fundamental} if
 $\overline{\rm span}\{x_\alpha\}_{\alpha\in\Gamma}=X;$
it is a \emph{Marku\v{s}evi\v{c} basis} (\emph{M-basis}, for short) if
$$\overline{\rm span}\{x_\alpha\}_{\alpha\in\Gamma}=X\qquad\text{and}\qquad \overline{\rm span}^{w^*}\{f_\alpha\}_{\alpha\in\Gamma}=X^*.$$

Once an M-basis $\{x_\alpha;f_\alpha\} _{\alpha\in\Gamma}$ is present in a Banach space $X$, there is a natural notion of \emph{support} for a vector in $X$, as follows:
$${\rm supp}\,(x):=\{\alpha\in\Gamma\colon \langle f_\alpha,x\rangle\neq 0\}.$$
It is well-known that ${\rm supp}\,(x)$ is a countable subset of $\Gamma$, for every $x\in X$. More generally, one shows that any M-basis induces a linear continuous injection $T$ of $X$ into $c_0(\Gamma)$, via the map
$$x\mapsto T(x):=\left(\left\langle\frac{f_\alpha}{\|f_\alpha\|}, x\right\rangle\right)_{\alpha\in\Gamma}.$$
The definition of support can also be given when $\{x_\alpha;f_\alpha\} _{\alpha\in\Gamma}$ is merely a fundamental biorthogonal system and it is still true that the support of every vector in $X$ is countable. What ceases to be true is that $\supp(x)\neq\emptyset$ for $x\neq0$; accordingly, the map $T$ defined above might fail to be injective.

Analogously, the \emph{support} of a functional $x^*\in X^*$ is the set ${\rm supp}\,(x^*) :=\{ \alpha\in\Gamma\colon \langle x^*,x_\alpha\rangle\neq 0\}$ (which, in general, may fail to be a countable set).\smallskip

Let us then recall that a Banach space $X$ is \emph{weakly Lindel\"of determined} (WLD, for short) if there exist a set $\Gamma$ and a bounded linear one-to-one operator $T\colon X^*\to \ell_{\infty}^c(\Gamma)$ that is weak$^*$-to-pointwise continuous, see \cite{ArgMer}. Here, $\ell_{\infty}^{c}(\Gamma)$ stands for the Banach space of all real valued, bounded functions on $\Gamma$ with countable support. Moreover, we recall that if $X$ is a WLD Banach space with $\dens X=\kappa$, then, in the definition of the operator $T$, $\Gamma$ can be replaced by $\kappa$. WLD Banach spaces admit a simple characterisation in terms of M-bases, as follows. A Banach space $X$ is WLD if and only if it admits an M-basis such that the support of every functional in $X^*$ is countable; in which case, each M-basis has such property (see, \emph{e.g.}, \cite[Theorem 5.37]{HMVZ}).\smallskip

An important subclass of the class of WLD spaces is the class of weakly compactly generated Banach spaces. A Banach space is \emph{weakly compactly generated} (WCG, for short) if it contains a linearly dense, weakly compact subset. Likewise for WLD spaces, WCG Banach spaces can be characterised by the existence of M-bases with specific properties: a Banach space $X$ is WCG if and only if it admits a \emph{weakly compact} M-basis $\{x_{\alpha};f_{\alpha}\}_{\alpha\in\Gamma}$, \emph{i.e.}, such that $\{x_{\alpha}\}_{\alpha\in\Gamma}\cup \{0\}$ is a weakly compact subset of $X$, \cite{AmirLin} (see also \cite[Theorem 13.16]{FHHMZ}).\smallskip

Finally, let us mention that our notation concerning set theory follows, \emph{e.g.}, \cite{Jech}, where all notions here undefined can be found. Here, we just recall the statement of Hajnal's theorem on free sets (\cite{Haj61}, see \emph{e.g.}, \cite[\S 44]{EHMR} or \cite[\S 3.1]{Wil77}).
\begin{theorem}[Hajnal, \cite{Haj61}]\label{Hajnal th} Let $\kappa\geq\omega_2$ be a cardinal number. Then for every function $f\colon\kappa\to [\kappa]^{\leq\omega}$ there exists  $H\subseteq\kappa$ with $|H|=\kappa$ such that $f(x)\setminus\{x\}$ is disjoint from $H$, whenever $x\in H$.
\end{theorem}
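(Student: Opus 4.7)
The plan is to reduce to the case where $\kappa$ is regular (the singular case follows by adapting the argument along a cofinal sequence of regular subcardinals) and then apply Fodor's pressing-down lemma twice, handling first the regressive direction of $f$ and then the progressive one.

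Assume $\kappa$ is regular. For each $\alpha<\kappa$, split $f(\alpha)$ as $f_-(\alpha):=f(\alpha)\cap\alpha$ and $f_+(\alpha):=f(\alpha)\setminus(\alpha+1)$. The set $E:=\{\alpha<\kappa:\cf(\alpha)=\omega_1\}$ is stationary in $\kappa$ (this is where $\kappa\geq\omega_2$ enters), and on $E$ we have $|f_-(\alpha)|\leq\omega<\cf(\alpha)$, whence $\sup f_-(\alpha)<\alpha$. Fodor's lemma applied to this regressive map yields $\gamma_0<\kappa$ and a stationary $E_0\subseteq E$ with $f_-(\alpha)\subseteq\gamma_0+1$ for every $\alpha\in E_0$; the set $H_0:=E_0\setminus(\gamma_0+1)$ is then stationary and satisfies $f_-(\alpha)\cap H_0=\emptyset$ for every $\alpha\in H_0$, thereby disposing of the regressive direction.

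To handle the progressive direction, I introduce the inverse map
\[
F(\beta):=\{\alpha\in H_0:\alpha<\beta\text{ and }\beta\in f(\alpha)\}\qquad(\beta\in H_0),
\]
so that a subset $H\subseteq H_0$ is free for $f$ precisely when $F(\beta)\cap H=\emptyset$ for every $\beta\in H$. Split $H_0=S\sqcup T$ according to whether $|F(\beta)|\leq\omega$ or $|F(\beta)|\geq\omega_1$. If $T$ were stationary, a selector $\psi(\beta)\in F(\beta)$ would be regressive on $T$, and by Fodor $\psi$ would be constant $\equiv\alpha^*$ on a stationary $T_0\subseteq T$, forcing $T_0\subseteq f(\alpha^*)$ and contradicting $|T_0|=\kappa\geq\omega_2>\omega\geq|f(\alpha^*)|$. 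Hence $S$ is stationary; on $S$, $F$ is countable-valued and $\sup F(\beta)<\beta$ (since $\cf\beta=\omega_1$), so a second application of Fodor produces $\gamma_1<\kappa$ and a stationary $S_0\subseteq S$ with $F(\beta)\subseteq\gamma_1+1$ for all $\beta\in S_0$. Setting $H:=S_0\setminus(\gamma_1+1)$, both $f_-(\alpha)\cap H=\emptyset$ (inherited from $H_0\supseteq H$) and $F(\beta)\cap H=\emptyset$ hold for all $\alpha,\beta\in H$, which is the free-set condition.

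The main obstacle is the dichotomy $H_0=S\sqcup T$ above: a naive counting bound $\sum_\alpha|f(\alpha)|\leq\kappa\cdot\omega=\kappa$ is too coarse to exclude $|T|=\kappa$, because in cardinal arithmetic $\kappa\cdot\omega_1=\kappa$ as well. The trick is to apply Fodor not to some cardinality functional but to a \emph{single} selector $\psi(\beta)\in F(\beta)$; the resulting contradiction then relies purely on $|f(\alpha^*)|\leq\omega$ being strictly less than the size of any stationary subset of $\kappa$. This is precisely where the hypothesis $\kappa\geq\omega_2$ is indispensable, since it is what guarantees the stationarity of the set $E$ of $\omega_1$-cofinal ordinals on which both Fodor applications stand---and indeed, at $\kappa=\omega_1$ the set $E$ is empty and the theorem genuinely fails, as witnessed by $f(\alpha):=\alpha$.
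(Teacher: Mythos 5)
The paper does not actually prove this statement: it is quoted verbatim from Hajnal's 1961 article, with pointers to \cite[\S 44]{EHMR} and \cite[\S 3.1]{Wil77}, so there is no internal proof to compare against. Your argument for \emph{regular} $\kappa$ is correct and is essentially the standard Fodor-style proof: work on the stationary set $E$ of ordinals of cofinality $\omega_1$ (stationary precisely because $\kappa\geq\omega_2$), freeze the regressive part $f_-$ below a fixed $\gamma_0$ by one pressing-down, and kill the progressive part via the inverse map $F$. Two small remarks: your dichotomy $H_0=S\sqcup T$ is superfluous, since the selector argument applied to \emph{all} $\beta\in H_0$ with $F(\beta)\neq\emptyset$ already shows that this set is non-stationary (the contradiction $T_0\subseteq f(\alpha^*)$ only needs $F(\beta)\neq\emptyset$, not $|F(\beta)|\geq\omega_1$), so $H:=\{\beta\in H_0\colon F(\beta)=\emptyset\}$ is stationary and free and the second Fodor application can be dispensed with; and your closing observation that the theorem fails at $\kappa=\omega_1$ via $f(\alpha)=\alpha$ is a nice sanity check.

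The genuine gap is the singular case, which you dismiss in one sentence. This is not a routine adaptation: the regular case admits exactly the short argument you give, whereas the singular case is the substantive content of Hajnal's theorem (it is what settled Ruziewicz's conjecture). Concretely, if you fix regular cardinals $\kappa_i\nearrow\kappa$ and pairwise disjoint blocks $A_i$ with $|A_i|=\kappa_i$, you can shrink the later blocks so that no element of an earlier block maps into a later one, and you can make each block internally free by the regular case; but you cannot control the images of elements of a later block $A_j$ landing in an earlier block $A_i$: since $\kappa_j\cdot\omega=\kappa_j>\kappa_i$, the set $\bigcup_{x\in A_j}f(x)$ may cover $A_i$ entirely, and no pressing-down argument inside $A_j$ removes this backward interference, because $A_i$ is a fixed set of smaller cardinality rather than a bounded initial segment. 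Handling this is the nontrivial combinatorial core of the singular case (see \cite[\S 44]{EHMR} or \cite[\S 3.1]{Wil77}). The omission matters for this paper, which applies the theorem to $\kappa=\dens X$ for arbitrary $\dens X\geq\omega_2$ --- a cardinal that may well be singular (indeed the paper explicitly raises the case $\dens X=\omega_\omega$) --- so the singular case cannot be waved away here.
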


\section{Existence results}\label{S: existence}
In this section, we shall present results concerning the existence or non-existence of overcomplete sets in non-separable Banach spaces; in particular, these results prove Theorem A. Loosely speaking, overcomplete sets do not exist in `large' Banach spaces; more precisely, we will show that a Banach space contains no overcomplete set if:
\begin{romanenumerate}
    \item $\cf(\dens X)\geq \mathfrak{c}^+$, or
    \item $\dens X\geq \omega_2$, provided $X$ admits a fundamental biorthogonal system.
\end{romanenumerate}
On the other hand, under the assumption of the continuum hypothesis, we will show that every `small' Banach space $X$ (\emph{i.e.}, $\dens X=\dens X^*=\omega_1$) contains overcomplete sets. Finally, we will show that the assumption on the density character of the dual space cannot be dropped. Indeed we will show, in ZFC, that $\ell_1(\omega_1)$ does not contain overcomplete sets.

\begin{theorem}[CH]\label{T: existence under CH}
Let $X$ be a Banach space with ${\rm dens}\,X={\rm dens}\,X^*=\omega_1$ and let $C\subseteq X$. If $C$ cannot be covered by countably many hyperplanes, then it contains an overcomplete set for $X$.
\end{theorem}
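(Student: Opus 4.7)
The plan is a transfinite recursion of length $\omega_1$. Before starting, I would record a convenient reformulation: a set $S\subseteq X$ with $|S|=\omega_1$ is overcomplete if and only if $|S\cap H|\leq\aleph_0$ for every closed hyperplane $H\subseteq X$. Indeed, by the Hahn--Banach theorem a subset $\Lambda$ is linearly dense precisely when it fails to be contained in any closed hyperplane; hence if every hyperplane meets $S$ in a countable set, then each uncountable $\Lambda\subseteq S$ escapes every hyperplane and must be linearly dense, while conversely if some hyperplane contained $\omega_1$ points of $S$, that intersection would be an uncountable non-linearly-dense subfamily. Under CH the hypothesis $\dens X^{*}=\omega_1$ yields $|X^{*}|\leq(\dens X^{*})^{\aleph_0}=\omega_1^{\aleph_0}=\omega_1$, so $X$ admits precisely $\omega_1$ closed hyperplanes, which I would enumerate as $\{H_\alpha\colon\alpha<\omega_1\}$.

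The recursion produces $(x_\alpha)_{\alpha<\omega_1}\subseteq C$ subject to the invariant $x_\alpha\notin H_\beta$ for every $\beta<\alpha$. At a generic stage $\alpha<\omega_1$, the set $\{x_\gamma\colon\gamma<\alpha\}$ is countable, so its closed linear span is a separable, and hence proper (using $\dens X=\omega_1$), subspace of $X$; by Hahn--Banach this span lies inside some closed hyperplane $H'_\alpha$. The union $\bigcup_{\beta<\alpha}H_\beta\cup H'_\alpha$ is then a countable union of closed hyperplanes, and the standing assumption on $C$ supplies a point $x_\alpha\in C$ avoiding all of them. Such an $x_\alpha$ satisfies the invariant and, lying outside $H'_\alpha$, is automatically different from every previously selected $x_\gamma$.

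Once the recursion terminates I would set $S:=\{x_\alpha\colon\alpha<\omega_1\}\subseteq C$; the points are pairwise distinct, so $|S|=\omega_1$. For any fixed $\beta<\omega_1$ the invariant forces $\{\alpha<\omega_1\colon x_\alpha\in H_\beta\}\subseteq\{\alpha\colon\alpha\leq\beta\}$, which is countable, whence $|S\cap H_\beta|\leq\aleph_0$; by the reformulation $S$ is overcomplete. I do not anticipate any genuine obstacle: the argument is a clean marriage of CH (to enumerate all hyperplanes in length $\omega_1$) with the non-coverability hypothesis on $C$ (to keep the recursion alive). The subtlest moment is the absorption of the countable history of previously chosen points into one extra hyperplane, which is precisely where $\dens X=\omega_1$ intervenes to guarantee that the closed span of a countable set is always a proper subspace, and also where the hypothesis on $C$ is used in its full strength ``\emph{countably many} hyperplanes''.
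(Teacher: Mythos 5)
Your proposal is correct and follows essentially the same transfinite recursion as the paper: under CH enumerate the (at most $\mathfrak{c}=\omega_1$ many) hyperplanes, use the non-coverability of $C$ to pick $x_\alpha\in C$ avoiding all previously listed hyperplanes, and conclude via the observation that a set meeting every hyperplane in a countable set is overcomplete. The only cosmetic difference is that you secure injectivity by enclosing the countable history in one extra hyperplane $H'_\alpha$, whereas the paper simply adjoins the previously chosen points to the union to be avoided; both devices are equivalent.
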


Since a Banach space cannot be covered by countably many hyperplanes, in light of the Baire category theorem, we have the following particular case.
\begin{corollary}[CH]\label{C: existence under CH} Every Banach space $X$ with ${\rm dens}\,X={\rm dens}\,X^*=\omega_1$ contains an overcomplete set.
\end{corollary}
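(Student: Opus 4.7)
The plan is to deduce Corollary 3.2 as an immediate application of Theorem 3.1, taking $C=X$ itself. The only substantive thing to verify is that $X$ cannot be covered by countably many hyperplanes, and for this I would invoke the Baire category theorem exactly as the remark preceding the corollary suggests.

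In detail, let $X$ be a Banach space with $\dens X=\dens X^*=\omega_1$, and suppose toward a contradiction that $X=\bigcup_{n<\omega}H_n$ for some sequence of hyperplanes $H_n$. By the definition recalled in the Preliminaries, each $H_n$ is a closed linear subspace of codimension one, in particular a proper closed linear subspace of $X$. A proper linear subspace of a normed space has empty interior: if $H_n$ contained an open ball $B(x_0,r)$, then $H_n-x_0\supseteq B(0,r)$ would also lie in $H_n$ (since $H_n-x_0=H_n$ up to translation, and more precisely $H_n$ is a linear subspace, so $B(0,r)=B(x_0,r)-x_0\subseteq H_n-x_0=H_n$), and scaling would force $H_n=X$. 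Hence each $H_n$ is closed and nowhere dense. Since $X$ is a complete metric space, the Baire category theorem excludes the possibility $X=\bigcup_{n<\omega}H_n$, giving the desired contradiction.

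With this observation, Theorem 3.1 applied to $C=X$ provides an overcomplete set $S\subseteq X$, which is exactly the conclusion of the corollary.

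The argument is entirely routine given Theorem 3.1; there is no real obstacle. The only minor thing to state cleanly is the passage from ``proper closed subspace'' to ``nowhere dense'', which I would either spell out in one line (as above, using that any ball around a point of such a subspace, after translation and scaling, would fill $X$) or simply cite as a standard fact. The hypothesis $\dens X=\dens X^*=\omega_1$ is used only insofar as it is required by Theorem 3.1; no further use of CH is needed beyond what is already built into that theorem.
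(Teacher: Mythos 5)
Your proof is correct and is exactly the argument in the paper: apply Theorem \ref{T: existence under CH} with $C=X$, noting via the Baire category theorem that a Banach space cannot be covered by countably many (closed, proper, hence nowhere dense) hyperplanes. Nothing further is needed.
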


\begin{proof}[Proof of Theorem \ref{T: existence under CH}] We first notice that $|X^*|=\mathfrak{c}$: indeed, since every element of $X^*$ is a limit of a sequence contained in some dense subset $D$ of $X^*$ with $|D|=\mathfrak{c}$, we have $|X^*|\leq |D^\omega|=\mathfrak{c}^\omega=\mathfrak{c}$. Consequently, the collection of hyperplanes of $X$ has cardinality $\mathfrak{c}$ and the assumption of CH allows us to well order it in an injective $\omega_1$-sequence $(H_\alpha)_{\alpha<\omega_1}$. By transfinite induction, we now build an injective $\omega_1$-sequence of elements $(x_\beta)_{\beta<\omega_1}\subseteq C$ such that
$$x_\beta\notin H_\alpha\qquad \text{whenever }\alpha<\beta.$$
Indeed, let us assume to have already built $(x_\beta)_{\beta<\gamma}$, for some $\gamma<\omega_1$. Then,
$$\bigcup_{\alpha<\gamma}H_\alpha\cup \bigcup_{\beta<\gamma}\{x_\beta\}$$
does not cover $C$. Thus, every vector $x_\gamma\in C$ outside such a set is as desired.

Finally, we show that $(x_\beta)_{\beta<\omega_1}$ is overcomplete in $X$. Indeed, if not, there would be an uncountable subset $\Lambda$ of $\omega_1$ such that the closed linear span of $(x_\beta)_{\beta\in\Lambda}$ is a proper subspace of $X$, hence there would be a non-zero linear functional that vanishes on each $x_\beta$ ($\beta\in\Lambda$), \emph{i.e.}, its kernel contains uncountably many $x_\beta$'s. This is, however, a contradiction, because our construction assures that every hyperplane contains at most countably many elements of the sequence $(x_\beta)_{\beta<\omega_1}$.
\end{proof}

\begin{remark} The previous theorem actually characterises, under CH, sets that contain overcomplete subsets for $X$. Indeed, it follows from a conjunction of it and Proposition \ref{Prop: omega hyperplanes} that, under the assumptions of Theorem \ref{T: existence under CH}, a set $C$ contains an overcomplete subset for $X$ if and only if $C$ is not covered by countably many hyperplanes.
\end{remark}

The previous result allows us to ensure the existence, under the continuum hypothesis, of overcomplete sets in a wide class of Banach spaces.

\begin{corollary}[CH] Let $X$ be a WLD Banach space with $\dens X=\omega_1$; then X contains an overcomplete subset.
\end{corollary}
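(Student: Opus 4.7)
The plan is to deduce this corollary immediately from Corollary \ref{C: existence under CH}, whose only nontrivial hypothesis to verify is that $\dens X^*=\omega_1$. Since $\dens X^*\geq \dens X=\omega_1$ always holds (given a dense $\{f_\alpha\}_{\alpha<\kappa}\subseteq X^*$, Hahn--Banach norming yields vectors $x_\alpha\in X$ with $f_\alpha(x_\alpha)\geq \|f_\alpha\|/2$, and any functional vanishing on all $x_\alpha$ must be zero), the entire task reduces to the reverse inequality $\dens X^*\leq \omega_1$.

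For this I would exploit the defining property of WLD spaces: by the definition recalled in the Preliminaries, since $\dens X=\omega_1$, there is a bounded linear one-to-one, weak$^*$-to-pointwise continuous operator $T\colon X^*\to\ell_\infty^c(\omega_1)$. Injectivity of $T$ gives $|X^*|\leq |\ell_\infty^c(\omega_1)|$, so it suffices to bound the cardinality of $\ell_\infty^c(\omega_1)$ under CH. A function in $\ell_\infty^c(\omega_1)$ is determined by its countable support (a member of $[\omega_1]^{\leq\omega}$) together with its values on that support (a member of $\mathbb{R}^\omega$). Under CH,
\[
\big|[\omega_1]^{\leq\omega}\big|=\omega_1^\omega=(2^\omega)^\omega=2^\omega=\omega_1,\qquad |\mathbb{R}^\omega|=\mathfrak{c}^\omega=\omega_1,
\]
so $|\ell_\infty^c(\omega_1)|\leq \omega_1\cdot\omega_1=\omega_1$, whence $|X^*|\leq\omega_1$ and therefore $\dens X^*\leq |X^*|\leq\omega_1$.

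Combining the two inequalities, $\dens X=\dens X^*=\omega_1$, and Corollary \ref{C: existence under CH} applies to deliver an overcomplete subset of $X$. There is no real obstacle; the only ingredient beyond invoking the previous corollary is the CH-based cardinal arithmetic estimate $|\ell_\infty^c(\omega_1)|=\omega_1$, which turns the WLD hypothesis into the dual-density bound required by Corollary \ref{C: existence under CH}.
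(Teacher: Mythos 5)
Your proof is correct and follows essentially the same route as the paper: reduce to Corollary \ref{C: existence under CH} by showing $\dens X^*=\omega_1$, which the paper likewise obtains from the injection $X^*\hookrightarrow\ell_\infty^c(\omega_1)$ and the CH computation $|\ell_\infty^c(\omega_1)|=\omega_1$. Your write-up merely makes explicit the cardinal arithmetic and the trivial inequality $\dens X^*\geq\dens X$ that the paper leaves implicit.
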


\begin{proof} The assertion follows by Corollary \ref{C: existence under CH} and the fact that, under the continuum hypothesis, the cardinality of $\ell_{\infty}^{c}(\omega_1)$ is equal to $\omega_1$, whence $|X^*|=\omega_1$ as well.
\end{proof}

Let us remark that small WLD Banach spaces are not the unique Banach spaces containing overcomplete subsets. Indeed, $C([0,\omega_1])$, a typical example of a Banach space that is not WLD, contains overcomplete subsets, due to the next result.

\begin{corollary}[CH] Let $K$ be a scattered compact space of weight $\omega_1$. Then, $C(K)$ has an overcomplete subset.
\end{corollary}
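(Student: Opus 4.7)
My plan is to reduce the statement to Corollary \ref{C: existence under CH}, which under CH produces overcomplete sets in every Banach space $X$ with $\dens X = \dens X^* = \omega_1$. Accordingly, it suffices to verify these two density equalities for $X = C(K)$.

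The first equality, $\dens C(K) = \omega_1$, will follow from the classical identity $\dens C(K) = w(K)$ valid for every infinite compact Hausdorff space. For the second, I will invoke Rudin's theorem: since $K$ is scattered, every regular Borel measure on $K$ is purely atomic, hence $C(K)^*$ is isometrically isomorphic to $\ell_1(K)$. Consequently $\dens C(K)^* = |K|$, and the task reduces to showing $|K| = \omega_1$.

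For this cardinality computation I will use the standard fact that on an infinite scattered compact Hausdorff space cardinality and weight coincide; a straightforward transfinite induction on the Cantor--Bendixson derivatives of $K$ bounds $|K|$ by $w(K) = \omega_1$, while $|K|\geq w(K)$ is automatic for scattered compacts. Thus $|K| = \omega_1$, completing the verification.

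Once $\dens C(K) = \dens C(K)^* = \omega_1$ is in place, Corollary \ref{C: existence under CH} immediately yields an overcomplete subset of $C(K)$. I do not anticipate a substantial obstacle here: the actual content of the result lies in the scatteredness assumption, which through Rudin's theorem keeps the dual small; without scatteredness $\dens C(K)^*$ can easily exceed $\omega_1$ (already for $K=[0,1]$ one has $\dens C(K)^*=\mathfrak{c}$), and Corollary \ref{C: existence under CH} would be inapplicable.
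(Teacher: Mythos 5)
Your proposal is correct and follows essentially the same route as the paper: both reduce to Corollary \ref{C: existence under CH} by computing $\dens C(K)=w(K)=\omega_1$, using that cardinality and weight coincide for scattered compacta (the paper cites Koppelberg), and invoking Rudin's theorem to identify $C(K)^*$ with $\ell_1(K)=\ell_1(\omega_1)$.
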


\begin{proof} Since $K$ has weight $\omega_1$, $\dens C(K)=\omega_1$ too. Moreover, by \cite[Proposition 17.10]{Kopp}, we have $|K|=\omega_1$, hence, by \cite[Theorem 6]{Rudin1}, the dual space of $C(K)$ is isometric to $\ell_1(\omega_1)$. Therefore, the assertion follows by Corollary \ref{C: existence under CH}.
\end{proof}

In the forthcoming results, we shall show that Banach spaces with large size do not contain overcomplete subsets. Let us stress the fact that each of this results is achieved inside ZFC, without any further axiom. In particular, Theorem A(ii) follows from the next result.

\begin{theorem}\label{th: omega2 Hajnal} Let $X$ be a Banach spaces with ${\rm dens}\,X\geq\omega_2$. Suppose that $X$ has a fundamental biorthogonal system; then $X$ does not contain overcomplete subsets.
\end{theorem}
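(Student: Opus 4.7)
My plan is to argue by contradiction: assume $X$ has a fundamental biorthogonal system $\{x_\alpha;f_\alpha\}_{\alpha\in\Gamma}$ and contains an overcomplete set $S$ with $|S|=\kappa:=\dens X\geq\omega_2$, and then derive a contradiction by feeding the supports of the elements of $S$ into Hajnal's theorem.

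The first step is to verify that $|\Gamma|=\kappa$. The inequality $\dens X\leq|\Gamma|$ is immediate from the fact that the $x_\alpha$'s linearly span a dense subspace. For the reverse, overcompleteness of $S$ gives $\overline{\rm span}\,S=X$, so no nonzero $f_\alpha$ can vanish identically on $S$; hence for every $\alpha\in\Gamma$ there is some $s\in S$ with $\alpha\in\supp(s)$. Therefore $\Gamma=\bigcup_{s\in S}\supp(s)$, a union of $\kappa$ many countable sets, so $|\Gamma|\leq\kappa$.

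Next, I would fix a bijection $\Gamma\leftrightarrow\kappa$ (henceforth identifying the two) and an injective enumeration $S=\{s_\alpha:\alpha<\kappa\}$, and define $f:\kappa\to[\kappa]^{\leq\omega}$ by $f(\alpha):=\supp(s_\alpha)$. Hajnal's theorem (Theorem \ref{Hajnal th}) then supplies an $H\subseteq\kappa$ with $|H|=\kappa$ and $\supp(s_\alpha)\cap H\subseteq\{\alpha\}$ for every $\alpha\in H$. Pick any $\beta\in H$ and set $\Lambda:=\{s_\alpha:\alpha\in H\setminus\{\beta\}\}$; then $|\Lambda|=\kappa$. For each $\alpha\in H\setminus\{\beta\}$ one has $\beta\in H$ with $\beta\neq\alpha$, so the Hajnal property forces $\beta\notin\supp(s_\alpha)$, i.e.\ $\langle f_\beta,s_\alpha\rangle=0$. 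Thus the nonzero functional $f_\beta$ vanishes on $\Lambda$, so $\Lambda$ is not linearly dense, contradicting the overcompleteness of $S$.

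The main obstacle I anticipate is the bookkeeping that yields $|\Gamma|=\kappa$: Hajnal's theorem requires both the domain of $f$ and the ambient set inside which its values sit to be the same $\kappa$, and it is exactly this that allows me to identify an index $\beta\in H$ with a ``forbidden coordinate'' $\beta\in\Gamma$ in a single move. Once this identification is legitimate, the combinatorial heart of the argument is entirely absorbed by Theorem \ref{Hajnal th}, and the remainder is formal.
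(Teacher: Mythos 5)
Your proof is correct and follows essentially the same route as the paper: enumerate the overcomplete set, feed the (countable) supports into Hajnal's free-set theorem, and use a functional $f_\beta$ with $\beta$ in the free set to kill a $\kappa$-sized subset. The only difference is that you explicitly justify $|\Gamma|=\kappa$ (a point the paper takes for granted by indexing the system over $\kappa$), which is a welcome but minor piece of extra bookkeeping.
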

\begin{proof} 
Let $\kappa:=\dens X$ and $\{x_{\alpha};f_{\alpha}\}_{\alpha<\kappa}$ be a fundamental biorthogonal system for $X$. We recall that the support $\supp \,(x)$ of every element $x\in X$ is a countable subset of $\kappa$. Suppose, by contradiction, that an overcomplete subset $S$ in $X$ does exist and order it in a transfinite sequence $S=(y_{\alpha})_{\alpha<\kappa}$. Let $g\colon \kappa\to 2^{\kappa}$ defined by $g(\alpha)=\supp (y_\alpha)$. By Hajnal's theorem (Theorem \ref{Hajnal th}), there exists a set $H\subseteq \kappa$, with $|H|=\kappa$, such that $(g(\alpha)\setminus\{ \alpha\})\cap H=\emptyset$ for every $\alpha \in H$. In particular, fixed $\gamma\in H$, we deduce that $\gamma \notin \supp (y_{\alpha})$, whenever $\alpha \in H\setminus \{\gamma\}$. Hence, we obtain that $(y_{\alpha})_{\alpha\in H\setminus \{\gamma\}}\subseteq \ker f_{\gamma}$, which contradicts the overcompleteness of $S$.
\end{proof}

Similarly as in the separable case, the existence of overcomplete sets passes from a Banach space to each its quotient of the same density character. Thus, Theorem \ref{th: omega2 Hajnal} implies that a Banach space $X$ fails to have overcomplete sets if there exists a quotient $Y$ of $X$ with $\dens\,Y=\dens\,X\geq \omega_2$ and such that $Y$ admits a fundamental biorthogonal system. In particular, this applies to $\ell_\infty$, since it quotients onto $\ell_2 (\mathfrak{c})$ and $\dens\,\ell_\infty=\dens\,\ell_2 (\mathfrak{c})=\mathfrak{c}$. Therefore, we have the following consequences of Theorem \ref{th: omega2 Hajnal}. (We are indebted to W.B.~Johnson for pointing out this to us. We also thank W.~Kubi\'s for indicating us a different proof of Corollary \ref{cor: negCH ell_infty}, based on \cite{Kopp2}.)

\begin{corollary} Let $X$ be a Banach space with $\dens\,X\geq\omega_2$. If $X$ admits a WCG quotient $Y$ with $\dens\,Y=\dens\,X$, then $X$ admits no overcomplete set.
\end{corollary}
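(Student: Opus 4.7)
The plan is to argue that this corollary follows almost immediately from Theorem \ref{th: omega2 Hajnal}, combined with the quotient-permanence property for overcomplete sets and the Amir--Lindenstrauss theorem characterising WCG spaces.

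More concretely, I would proceed by contradiction. Suppose $X$ admits an overcomplete set $S$, and let $q\colon X\to Y$ be the quotient map onto the WCG space $Y$ with $\dens Y=\dens X$. The first step is to show that $q(S)$ is overcomplete in $Y$; this is the content of the sentence preceding the corollary (``overcomplete sets pass from $X$ to each quotient of the same density character''). To verify it explicitly: since $q$ is continuous and surjective and $S$ is linearly dense in $X$, the image $q(S)$ is linearly dense in $Y$, so $|q(S)|\geq \dens Y=\dens X=|S|$, forcing $|q(S)|=\dens Y$. Given any $\Lambda\subseteq q(S)$ with $|\Lambda|=\dens Y$, a selection of preimages yields $\Lambda'\subseteq S$ with $|\Lambda'|=|S|$; overcompleteness of $S$ gives $\overline{\operatorname{span}}\,\Lambda'=X$, and applying $q$ shows $\Lambda$ is linearly dense in $Y$.

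The second step is to observe that the WCG space $Y$ admits an M-basis by the theorem of Amir--Lindenstrauss recalled in the Preliminaries, and in particular a fundamental biorthogonal system. Since $\dens Y=\dens X\geq\omega_2$, Theorem \ref{th: omega2 Hajnal} applies and tells us that $Y$ contains no overcomplete set, contradicting what we established in the first step.

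I do not expect any real obstacle here: the quotient-permanence step is routine from the definition of overcompleteness, and the reduction to Theorem \ref{th: omega2 Hajnal} is immediate once one invokes the existence of an M-basis in the WCG quotient. The corollary is essentially a convenient repackaging of the remark that follows Theorem \ref{th: omega2 Hajnal}.
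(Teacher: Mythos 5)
Your proposal is correct and follows exactly the route the paper intends: the corollary is stated as an immediate consequence of the remark preceding it (quotient permanence of overcomplete sets plus Theorem \ref{th: omega2 Hajnal}), with the WCG hypothesis supplying a fundamental biorthogonal system on $Y$ via the Amir--Lindenstrauss characterisation recalled in the Preliminaries. Your explicit verification of the quotient-permanence step (including the cardinality bookkeeping ensuring $|q(S)|=\dens Y$) is sound and fills in the detail the paper leaves to the reader.
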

\begin{corollary}[$\neg$ CH]\label{cor: negCH ell_infty} $\ell_\infty$ has no overcomplete set.
\end{corollary}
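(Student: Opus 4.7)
The plan is to derive this statement as a direct application of the corollary immediately preceding it (the one asserting that a Banach space $X$ with $\dens\,X\geq\omega_2$ and a WCG quotient of the same density admits no overcomplete set). Thus essentially all the work has been done; the task is just to check the three hypotheses for $X=\ell_\infty$ under the negation of the continuum hypothesis.

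First I would record that $\dens\,\ell_\infty=\mathfrak{c}$, so the assumption $\neg$CH gives $\dens\,\ell_\infty=\mathfrak{c}\geq\omega_2$, which is the cardinality hypothesis of Theorem \ref{th: omega2 Hajnal}. Next I would invoke the standard fact, already used in the paragraph preceding the corollary, that $\ell_\infty$ admits $\ell_2(\mathfrak{c})$ as a quotient; since $\ell_2(\mathfrak{c})$ is a Hilbert space, it is reflexive and hence WCG (its closed unit ball is a linearly dense weakly compact subset). Moreover $\dens\,\ell_2(\mathfrak{c})=\mathfrak{c}=\dens\,\ell_\infty$, so the quotient has the same density as $\ell_\infty$.

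At this point the preceding corollary applies verbatim and yields that $\ell_\infty$ contains no overcomplete set. Alternatively, one could bypass the corollary and argue directly: $\ell_2(\mathfrak{c})$ has an orthonormal basis, which is a fundamental biorthogonal system, so Theorem \ref{th: omega2 Hajnal} gives that $\ell_2(\mathfrak{c})$ has no overcomplete set; then one uses the remark made just after that theorem, to the effect that overcompleteness is preserved by quotient maps between spaces of the same density character, to transfer the non-existence from $\ell_2(\mathfrak{c})$ to $\ell_\infty$.

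There is really no substantial obstacle here; the only thing worth being careful about is ensuring that $\neg$CH is used in the right place, namely only to guarantee $\mathfrak{c}\geq\omega_2$ so as to meet the cardinality assumption of Theorem \ref{th: omega2 Hajnal}. All other ingredients (existence of the quotient onto $\ell_2(\mathfrak{c})$, weak compactness of the unit ball of a Hilbert space, equality of densities) are provable in ZFC.
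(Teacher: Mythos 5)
Your proposal is correct and follows essentially the same route as the paper, which derives the corollary from Theorem \ref{th: omega2 Hajnal} via the quotient of $\ell_\infty$ onto the WCG space $\ell_2(\mathfrak{c})$ of the same density, with $\neg$CH used only to ensure $\mathfrak{c}\geq\omega_2$. Your identification of exactly where $\neg$CH enters, and that all other ingredients are ZFC facts, matches the paper's presentation.
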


In the case where $\dens X$ has big cofinality, we can give a simpler proof of the previous theorem (even in absence of a fundamental biorthogonal system), by means of the following result.
\begin{theorem} Let $X$ be a Banach space with {\rm dens}\,$X=\kappa$. If $\cf(\kappa) \geq\mathfrak{c}^+$, $X$ contains no overcomplete set.
\end{theorem}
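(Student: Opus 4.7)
The plan is to suppose for contradiction that $S=\{y_\alpha\}_{\alpha<\kappa}$ is overcomplete, and then to imitate the argument of Theorem~\ref{th: omega2 Hajnal} with the separable subspaces generated by countable subsets of $S$ playing the r\^ole of the hyperplanes $\ker f_\gamma$ that the fundamental biorthogonal system supplied there. Concretely, for each $\alpha<\kappa$ the family $\{y_\beta\}_{\beta\neq\alpha}$ has cardinality $\kappa$ and is hence linearly dense in $X$, so one may pick a countable $C_\alpha\subseteq\kappa\setminus\{\alpha\}$ with $y_\alpha\in V_\alpha:=\overline{\rm span}\{y_i\}_{i\in C_\alpha}$. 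Since $V_\alpha$ is separable, $|V_\alpha|\leq\mathfrak{c}$, and consequently the ``index set'' $\widetilde{C}_\alpha:=\{\beta<\kappa:y_\beta\in V_\alpha\}$ satisfies $|\widetilde{C}_\alpha|\leq\mathfrak{c}$.

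The crucial consequence of the hypothesis $\cf(\kappa)\geq\mathfrak{c}^+>\mathfrak{c}$ is that each such $\widetilde{C}_\alpha$ is a bounded subset of $\kappa$; this replaces, in the present biorthogonal-free setting, the use in Theorem~\ref{th: omega2 Hajnal} of the fact that countable supports are bounded whenever $\cf(\kappa)>\omega$. Set $\rho(\alpha):=\sup\widetilde{C}_\alpha<\kappa$. A transfinite recursion will then produce a strictly increasing sequence $(\alpha_\xi)_\xi$ in $\kappa$ with $\alpha_\xi>\sup_{\eta<\xi}\rho(\alpha_\eta)$; this step is legitimate because fewer than $\cf(\kappa)$-many ordinals $<\kappa$ have a common upper bound below $\kappa$. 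By construction, $y_{\alpha_\xi}\notin V_{\alpha_\eta}$ whenever $\eta<\xi$, which is precisely the independence that in Theorem~\ref{th: omega2 Hajnal} was extracted from Hajnal's theorem applied to the biorthogonal supports.

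To close the argument one combines the above with Hajnal's theorem (Theorem~\ref{Hajnal th}) applied to the countable witnesses $C_\alpha$, which is permissible because $\kappa\geq\cf(\kappa)\geq\omega_2$; the goal is to extract a set $\Lambda\subseteq\kappa$ of cardinality $\kappa$ such that $\{y_\alpha\}_{\alpha\in\Lambda}$ is contained in a proper closed subspace of $X$, which will contradict the overcompleteness of $S$. The delicate point, and what forces the cofinality hypothesis $\cf(\kappa)\geq\mathfrak{c}^+$ rather than the milder $\kappa\geq\omega_2$ of Theorem~\ref{th: omega2 Hajnal}, is ensuring that the subspace hosting $\{y_\alpha\}_{\alpha\in\Lambda}$ is genuinely proper; I expect this to be the main technical step, and it is here that the bound $|\widetilde{C}_\alpha|\leq\mathfrak{c}$, coupled with the large cofinality, should perform the cardinality bookkeeping that the biorthogonal functionals did explicitly in the previous theorem.
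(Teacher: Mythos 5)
There is a genuine gap: the step that would actually produce the contradiction is never carried out --- you explicitly defer it (``I expect this to be the main technical step''), and it is precisely the part that does not follow from the machinery you set up. The Hajnal free set $H$ obtained from $\alpha\mapsto C_\alpha$ only tells you that the witness sets avoid $H$, i.e., each $y_\alpha$ ($\alpha\in H$) lies in the closed span of vectors indexed \emph{outside} $H$. That is a statement about how large $\overline{\rm span}\{y_i\}_{i\in\kappa\setminus H}$ is; it produces no proper closed subspace containing $\kappa$ many elements of $S$, which is what refuting overcompleteness requires. In Theorem \ref{th: omega2 Hajnal} the contradiction came from the functional $f_\gamma$, which vanishes on every $y_\alpha$ with $\alpha\in H\setminus\{\gamma\}$; here there is no analogue of $f_\gamma$. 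There is also a circularity: to contradict the density of $\{y_\alpha\}_{\alpha\in H\setminus\{\gamma\}}$ you would need the witness set $C_\gamma$ to have been chosen inside $H$, but the $C_\alpha$ must be fixed \emph{before} Hajnal's theorem is invoked. Finally, the transfinite recursion producing $(\alpha_\xi)$ can only be continued for $\cf(\kappa)$ steps (for singular $\kappa$ this is strictly less than $\kappa$), and even where it runs it yields a family about which no non-density assertion is established.

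The paper's proof is entirely different and elementary, and uses neither Hajnal's theorem nor the sets $C_\alpha$: fix two linearly independent functionals $x^*,y^*\in X^*$; then
$$X=\bigcup_{(a,b)\in\R^2\setminus\{0\}}\ker(ax^*+by^*)$$
exhibits $X$ as a union of $\mathfrak{c}$ hyperplanes. If $|A|=\kappa$ and $\cf(\kappa)\geq\mathfrak{c}^+$, the pigeonhole principle gives a single hyperplane containing $\kappa$ elements of $A$, so $A$ cannot be overcomplete. Your observation that a separable subspace has cardinality at most $\mathfrak{c}$, and hence meets $S$ in at most $\mathfrak{c}$ points, is the right cardinality bookkeeping, but it must be applied to a family of at most $\mathfrak{c}$ proper closed subspaces that \emph{covers all of} $X$ --- hyperplanes achieve this --- rather than to the closed spans of countable subsets of $S$, which need not cover anything and, being dense candidates only for separability reasons, carry no contradiction with overcompleteness.
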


\begin{proof} Let us start by recalling that every Banach space is union of $\mathfrak{c}$ hyperplanes. Indeed, if $x^*,y^*\in X^*$ are linearly independent functionals, then
\begin{equation*}X=\bigcup_{(\alpha,\beta)\in\R^2\setminus\{0\}} \ker(\alpha x^*+\beta y^*).
\end{equation*}

Now, let $A$ be a subset of $X$ such that $|A|=\kappa$. Since $A$ is contained in a union of $\mathfrak{c}$ hyperplanes and $\cf(\kappa)\geq \mathfrak{c}^+$, there exists a hyperplane that contains $\kappa$ vectors from $A$. Consequently, $A$ cannot be overcomplete.
\end{proof}

\begin{proposition}\label{Prop: omega hyperplanes}
Let $X$ be a Banach space with $\dens X=\kappa$. If $D\subseteq X$ is covered by less than $\cf(\kappa)$ many hyperplanes, then $D$ does not contain overcomplete sets.
\end{proposition}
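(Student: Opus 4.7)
The plan is a direct pigeonhole argument using the definition of cofinality. Suppose, towards a contradiction, that $S \subseteq D$ is an overcomplete set, so in particular $|S| = \kappa$. By assumption there exist hyperplanes $\{H_i\}_{i \in I}$ with $|I| < \cf(\kappa)$ and $D \subseteq \bigcup_{i \in I} H_i$; restricting, we have $S = \bigcup_{i \in I}(S \cap H_i)$.

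The first step is to apply the defining property of $\cf(\kappa)$: a set of cardinality $\kappa$ cannot be written as the union of fewer than $\cf(\kappa)$ subsets each of cardinality strictly less than $\kappa$. Since $|I| < \cf(\kappa)$, it follows that there exists some index $i_0 \in I$ for which $|S \cap H_{i_0}| = \kappa = |S|$.

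The second step is to derive the contradiction from overcompleteness. Set $\Lambda := S \cap H_{i_0}$; then $\Lambda \subseteq S$ and $|\Lambda| = |S|$, so by overcompleteness $\Lambda$ is linearly dense in $X$. On the other hand, $\Lambda \subseteq H_{i_0}$, and since $H_{i_0}$ is (by the convention set in the Preliminaries) a \emph{closed} linear subspace of codimension one, we get $\overline{\mathrm{span}}\,\Lambda \subseteq H_{i_0} \neq X$, a contradiction.

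There is no real obstacle here; the only point that requires a moment of care is to remember that hyperplanes are by convention closed, so that $H_{i_0}$ already contains the closed linear span of $\Lambda$, and to invoke the correct formulation of cofinality (which handles both regular and singular $\kappa$ uniformly). The statement is essentially a restatement of the definition of $\cf(\kappa)$ combined with the fact that a hyperplane is a proper closed subspace.
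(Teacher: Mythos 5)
Your proof is correct and follows essentially the same pigeonhole argument as the paper: cover $S$ by fewer than $\cf(\kappa)$ hyperplanes, extract one hyperplane meeting $S$ in a set of full cardinality $\kappa$, and contradict overcompleteness since a closed hyperplane is a proper closed subspace. The extra care you take in spelling out the cofinality argument and the closedness of hyperplanes is fine but adds nothing beyond the paper's version.
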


\begin{proof}
Suppose by contradiction that $D$ contains an overcomplete set, say $A$. Let $(H_{\alpha})_{\alpha<\lambda}$ be a family of hyperplanes of $X$ such that $D\subseteq \bigcup_{\alpha<\lambda}H_\alpha$, where $\lambda<\cf(\kappa)$. Since $|A|=\kappa$ and $\cf(\kappa)>\lambda$, there exists $\alpha<\lambda$ such that $|A\cap H_\alpha|=\kappa$, a contradiction.
\end{proof}

\begin{remark} The previous result shows that many incomplete non-separable normed spaces fail to contain overcomplete subsets. Indeed, let $X$ be a Banach space that admits an M-basis $\{x_{\alpha};f_{\alpha}\}_{\alpha<\kappa}$ and with $\cf\kappa \geq\omega_1$. Then, the (incomplete) normed space $Y:={\rm span}\{x_\alpha\}_{\alpha<\kappa} $ does not contain overcomplete sets, since $Y\subseteq \bigcup_{n<\omega}\ker f_n$ (here, we apply the previous proposition with $D=Y$). Actually, the assumption $\cf\kappa \geq\omega_1$ can be relaxed to $\kappa\geq\omega_1$, by using a similar argument as in Theorem \ref{th: omega2 Hajnal}. The situation is different in the separable context, as we shall see in Section \ref{S: normed spaces}.
\end{remark}

\begin{theorem} The Banach space $\ell_1(\Gamma)$ does not contain any overcomplete set, whenever $\Gamma$ is uncountable.
\end{theorem}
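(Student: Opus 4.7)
The plan is to split according to the size of $\Gamma$. If $|\Gamma|\geq\omega_2$, the conclusion is immediate from Theorem \ref{th: omega2 Hajnal}: the canonical unit vector basis $\{e_\gamma;e_\gamma^*\}_{\gamma\in\Gamma}$ is an M-basis of $\ell_1(\Gamma)$, hence in particular a fundamental biorthogonal system, and $\dens\,\ell_1(\Gamma)=|\Gamma|$. The real work is therefore in the case $|\Gamma|=\omega_1$, which I now outline.

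Suppose for contradiction that $S=(y_\alpha)_{\alpha<\omega_1}$ is an (injectively enumerated) overcomplete set in $\ell_1(\omega_1)$. Each vector $y_\alpha$ has a countable support $A_\alpha:=\supp(y_\alpha)\subseteq\omega_1$. I would feed the family $(A_\alpha)_{\alpha<\omega_1}$ into the classical $\Delta$-system lemma for countable sets, yielding an uncountable $I\subseteq\omega_1$ and a countable root $R\subseteq\omega_1$ such that the sets $A_\alpha\setminus R$ (for $\alpha\in I$) are pairwise disjoint. This is the combinatorial replacement for Hajnal's theorem, which is not available at $\omega_1$.

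Having the $\Delta$-system in hand, I pick any coordinate $\gamma\in\omega_1\setminus R$. Because the sets $A_\alpha\setminus R$ are pairwise disjoint, at most one $\alpha\in I$ can satisfy $\gamma\in A_\alpha$; consequently $J:=\{\alpha\in I:\gamma\notin\supp(y_\alpha)\}$ is still uncountable. The coordinate functional $e_\gamma^*\colon x\mapsto x(\gamma)$ belongs to $\ell_\infty(\omega_1)=\ell_1(\omega_1)^*$, and by construction $e_\gamma^*(y_\alpha)=0$ for every $\alpha\in J$. Thus the uncountable family $(y_\alpha)_{\alpha\in J}$ is contained in the hyperplane $\ker e_\gamma^*$, contradicting the assumed overcompleteness of $S$.

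The conceptual obstacle is simply the choice of combinatorial tool: at $\omega_1$ one cannot literally reuse the Hajnal-based proof of Theorem \ref{th: omega2 Hajnal}, and the $\Delta$-system lemma is the natural weaker substitute, delivering exactly the pairwise disjointness of supports (off the root) needed to locate a common zero coordinate. Beyond this point the argument mirrors the $\omega_2$ case and relies only on the two defining features of $\ell_1(\Gamma)$ that matter here: every vector has countable support, and each coordinate evaluation is a bounded linear functional.
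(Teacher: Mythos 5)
Your reduction of the case $|\Gamma|\geq\omega_2$ to Theorem \ref{th: omega2 Hajnal} is correct and is exactly what the paper does. The case $\Gamma=\omega_1$, however, contains a genuine gap: the $\Delta$-system lemma you invoke is \emph{false} for $\omega_1$ many countable sets. The generalized $\Delta$-system lemma for a family of $\kappa$ sets each of size $<\lambda$ requires (besides regularity of $\kappa$) that $|\alpha|^{<\lambda}<\kappa$ for all $\alpha<\kappa$; for countable sets this forces $\kappa>\mathfrak{c}$, and this is precisely the reason why purely combinatorial tools (Hajnal's free-set theorem, or $\Delta$-systems) are only available from $\omega_2$ onwards. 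A concrete counterexample at $\omega_1$ is the family $A_\alpha:=[0,\alpha)$, $\alpha<\omega_1$: if $I\subseteq\omega_1$ is uncountable and $R$ is countable with $(A_\alpha\setminus R)_{\alpha\in I}$ pairwise disjoint, then for any $\alpha<\beta$ in $I$ one gets $[0,\alpha)=A_\alpha\cap A_\beta\subseteq R$, and since $I$ is unbounded in $\omega_1$ this makes $R$ uncountable. The same family shows that the weaker conclusion you actually need --- a single coordinate $\gamma$ with $\gamma\notin\supp(y_\alpha)$ for uncountably many $\alpha$ --- can also fail at the level of supports: with $\supp(y_\alpha)=[0,\alpha)$, every coordinate $\gamma$ lies in the support of all but countably many of the vectors, so no coordinate functional annihilates an uncountable subfamily. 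Hence no argument that only looks at the supports can settle the case $\Gamma=\omega_1$.

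What the paper does instead is an analytic, norm-based argument. Normalise $S$, set $N_\alpha:=\inf\{\|x\flag_{[0,\alpha)}\|\colon x\in S\}$, and observe that this nondecreasing function of $\alpha$ stabilises at some value $N$, which must be $<1$ since $S$ is linearly dense. A transfinite sliding-hump then extracts a long sequence $(x_\gamma)_{\gamma<\omega_1}\subseteq S$ whose initial parts beyond $\alpha_0$ have mass at most $\e$ and whose tails are disjointly supported with mass at least $1-N-\e$; a direct estimate shows this is a long basic sequence equivalent to the unit vector basis of $\ell_1(\omega_1)$, and a basic sequence can never be overcomplete. If you want to salvage your approach, you must replace the $\Delta$-system step by something that exploits the $\ell_1$ norm in this way; the combinatorics alone will not suffice.
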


\begin{proof} In the case where $|\Gamma|\geq\omega_2$, the result is consequence of Theorem \ref{th: omega2 Hajnal}; therefore, we can assume that $\Gamma=\omega_1$. Let us then assume, by contradiction, that $S$ is an overcomplete set in $\ell_1(\omega_1)$; clearly, we can additionally assume, without loss of generality, that every element in $S$ is a unit vector.

For each $\alpha<\omega_1$, consider the quantity
$$N_\alpha:=\inf\left\{\|x\flag_{[0,\alpha)}\|\colon x\in S \right\},$$
where, for a vector $x=(x(\gamma))_{\gamma<\omega_1}\in \ell_1(\omega_1)$, we understand $x\flag_{[0,\alpha)}:=(x(\gamma)\cdot 1_{[0,\alpha)}(\gamma))_{\gamma<\omega_1}$. Observe that, when $\alpha<\beta<\omega_1$, we have $N_\alpha\leq N_\beta$; consequently, the function $\alpha\mapsto N_\alpha$ is eventually constant and we can select $\alpha_0<\omega_1$ such that
$$N_\alpha = N_{\alpha_0}\;\text{ whenever }\; \alpha_0\leq \alpha<\omega_1.$$

Set $N:=N_{\alpha_0}$ and note that, evidently, $N\in[0,1]$. We first claim that, actually, $N<1$. Indeed, if it were $N=1$, then $\|x\flag_{[0,\alpha_0)}\|=1$, for each $x\in S$, whence ${\rm supp}\,(x)\subseteq[0,\alpha_0)$ for every $x\in S$. This is, of course, impossible, since the set $S$ is (over)complete. Let us now fix $\e>0$ such that $1-N-2\e\geq \frac{1-N}{2}$; by definition, we then have the following property:
\begin{equation}\label{Eq: sliding}
    \text{for every}\; \beta\geq\alpha_0\; \text{there exists}\; x\in S  \;\text{such that}\; \|x\flag_{[0,\beta)}\|\leq N+\e.
\end{equation}

This property allows us to run a sliding hump argument, by transfinite induction. Select arbitrarily a vector $x_0\in S$ such that $\|x_0\flag_{[0,\alpha_0)}\|\leq N+\e$; then, find an ordinal $\alpha_1<\omega_1$ such that $\supp\,(x_0)<\alpha_1$ and appeal to (\ref{Eq: sliding}) to find $x_1\in S$ such that $\|x_1\flag_{[0,\alpha_1)}\|\leq N+\e$. By transfinite induction, we then find a long sequence $(x_\gamma)_{\gamma<\omega_1}\subseteq S$ and an increasing long sequence $(\alpha_\gamma)_{\gamma<\omega_1}\subseteq\omega_1$ such that, for every $\gamma<\omega_1$,
\begin{romanenumerate}
    \item $\|x_\gamma\flag_{[0,\alpha_\gamma)}\|\leq N+\e$;
    \item $\supp (x_\beta) <\alpha_\gamma$, whenever $\beta<\gamma$.
\end{romanenumerate}
Indeed, assuming to have already built vectors $(x_\beta)_ {\beta<\gamma}$ and ordinals $(\alpha_\beta)_{\beta<\gamma}$, for some $\gamma<\omega_1$, we just select $\alpha_\gamma$ to satisfy (ii) and then (\ref{Eq: sliding}) yields us the desired vector $x_\gamma$.

Let us then observe that the vectors $(x_\gamma)_{\gamma<\omega_1}$ have the following additional properties:
\begin{romanenumerate}\setcounter{smallromans}{2}
    \item $\|x_\gamma\flag_{[\alpha_\gamma,\omega_1)}\|\geq1-N-\e$;
    \item $\|x_\gamma\flag_{[\alpha_0,\alpha_\gamma)}\|\leq\e$.
\end{romanenumerate}
Indeed, (iii) follows from (i), as $x_\gamma$ is a unit vector; (iv) follows from (i) either and the fact that $\|x_\gamma\flag_{[0,\alpha_0)}\|\geq N$.\smallskip

We are now in position to prove that the vectors $(x_\gamma)_{\gamma<\omega_1}$ are indeed a long basic sequence, equivalent to the canonical $\ell_1(\omega_1)$ basis. In order to prove such claim, consider the vectors
$$y_\gamma:=x_\gamma - x_\gamma\flag_{[\alpha_0,\alpha_\gamma)}= x_\gamma\flag_{[0,\alpha_0)}+x_\gamma\flag_{[\alpha_\gamma, \omega_1)};$$
then $\|x_\gamma-y_\gamma\|\leq\e$, according to (iv). Moreover, the vectors
$$y_\gamma\flag_{[\alpha_0,\omega_1)}= x_\gamma\flag_ {[\alpha_\gamma,\omega_1)}$$
are disjointly supported, due to (ii). Therefore, for every choice of $n<\omega$, indices $\gamma_0,\dots,\gamma_n<\omega_1$ and scalars $a_0,\dots,a_n$, we have
$$\left\|\sum_{j=0}^n a_jx_{\gamma_j}\right\|\geq \left\|\sum_{j=0}^n a_jy_{\gamma_j}\right\|- \e\cdot\sum_{j=0}^n |a_j|\geq \left\|\sum_{j=0}^n a_j\cdot y_{\gamma_j} \flag_{[\alpha_0,\omega_1)} \right\|- \e\cdot\sum_{j=0}^n |a_j|$$
$$=\sum_{j=0}^n|a_j| \left\|y_{\gamma_j} \flag_{[\alpha_0, \omega_1)}\right\| - \e\cdot\sum_{j=0}^n |a_j|\geq (1-N-2\e) \cdot\sum_{j=0}^n |a_j|\geq\frac{1-N}{2}\cdot\sum_{j=0}^n |a_j|,$$
where we used the disjointness of supports and (iii). The upper estimate being direct consequence of the triangle inequality, our claim is proved. Moreover, the argument is also concluded, since the basic sequence $(x_\gamma)_{\gamma<\omega_1}\subseteq S$ is not overcomplete, thereby giving the desired contradiction.
\end{proof}

\begin{remark} It is a standard result that every Banach space of density $\kappa$ is a quotient of $\ell_1(\kappa)$. Moreover, it is plain that the existence of an overcomplete set passes from a Banach space to each its quotient of the same density character. Therefore, a natural approach to build overcomplete sets in non-separable Banach spaces would be to construct them in the spaces $\ell_1(\Gamma)$. Our previous result, however, implies in particular that this approach fails to work in the non-separable framework.
\end{remark}

\section{Properties of overcomplete sets}\label{S: properties}
In the present section, we shall study some topological properties of overcomplete sets in (non-separable) Banach spaces, focusing mostly on density and compactness results. Many of our results in the section can be interpreted as counterparts (or absence of a counterpart) to the result by Fonf and Zanco \cite{FonfZanco} that bounded overcomplete sequences in separable Banach spaces are relatively compact.

\subsection{Density}
It follows from the above-mentioned compactness result that every overcomplete sequence in an infinite-dimensional, separable Banach space is necessarily a nowhere dense set; on the other hand, it is a simple folklore result that every finite-dimensional Banach space contains overcomplete sequences that are dense sets. Let us record such a fact here; we also include its simple proof, for convenience of the reader.

\begin{fact} Every finite-dimensional Banach space contains a dense overcomplete sequence.
\end{fact}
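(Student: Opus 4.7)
The plan is to combine a countable dense sequence in $X$ with a small perturbation forcing the points into general linear position.

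Let $d:=\dim X$, which I may assume is $\geq 1$, and fix a countable dense sequence $(y_n)_{n\geq 1}$ in $X$. I shall inductively construct vectors $(x_n)_{n\geq 1}\subseteq X$ satisfying $\|x_n-y_n\|<1/n$ together with the following \emph{general position condition}: any at most $d$ vectors from $\{x_1,\dots,x_n\}$ are linearly independent. Once this has been achieved, the proof is essentially complete. Density of $(x_n)$ is immediate: for every $p\in X$ and $r>0$, infinitely many $y_n$ lie in $B(p,r/2)$, and all but finitely many of them also satisfy $1/n<r/2$, whence $x_n\in B(p,r)$. Overcompleteness follows because every hyperplane $H$ of $X$ has dimension $d-1$ and hence cannot contain $d$ linearly independent vectors; by the general position condition, $|H\cap\{x_n\colon n\geq 1\}|\leq d-1$. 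Consequently, no infinite $\Lambda\subseteq\{x_n\colon n\geq 1\}$ is contained in a proper (closed) subspace of $X$, so $\overline{\rm span}\,\Lambda={\rm span}\,\Lambda=X$, as required.

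The induction itself is routine. Assuming $x_1,\dots,x_{n-1}$ have been chosen satisfying the general position condition (the initial step $n=1$ just amounts to picking any nonzero element of $B(y_1,1)$), set
$$F_n:=\bigcup\bigl\{{\rm span}\,A\colon A\subseteq\{x_1,\dots,x_{n-1}\},\;|A|\leq d-1\bigr\}.$$
This is a finite union of proper subspaces of $X$, each closed with empty interior; hence $F_n$ is a nowhere dense closed set, and thus $B(y_n,1/n)\setminus F_n$ is nonempty. Any vector $x_n$ selected from this set preserves the general position property: a $\leq d$-subset of $\{x_1,\dots,x_n\}$ not containing $x_n$ is linearly independent by the inductive hypothesis, while one of the form $\{x_n\}\cup A$ with $|A|\leq d-1$ is linearly independent because $A$ is linearly independent by the inductive hypothesis and $x_n\notin{\rm span}\,A$ by construction. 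Note in particular that, taking $A=\emptyset$, we obtain $x_n\neq 0$ automatically.

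I do not anticipate any real obstacle. The feature being exploited is that in a finite-dimensional space every finite union of proper subspaces has empty interior, so general-position perturbations inside arbitrarily small balls are always available; this is precisely the feature that fails dramatically in the non-separable setting, where an uncountable supply of hyperplanes needs to be avoided simultaneously.
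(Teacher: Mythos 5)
Your proof is correct and follows essentially the same route as the paper's: an inductive construction of a sequence in general position (every $d$ chosen terms linearly independent), obtained by avoiding at each step the finite union of spans of small subsets of the previously chosen vectors, with density arranged by forcing each new term into a prescribed small open set. The only cosmetic difference is that you perturb a fixed dense sequence within shrinking balls, whereas the paper enumerates a countable basis of the topology and picks each term inside the corresponding basic open set.
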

\begin{proof} A simple way to build an overcomplete sequence in a finite-dimensional Banach space $X$ is to construct a sequence $(x_j)_{j<\omega}$ in $X$ such that every $d$ many terms extracted from the sequence are linearly independent, where $d= {\rm dim}\,X$. Such a sequence can be built by induction: once the first $d+k$ terms $x_0,\dots,x_{d+k-1}$ have been selected, every $d-1$ of the vectors $x_0,\dots,x_{d+k-1}$ are contained in exactly one hyperplane. The desired vector $x_{d+k}$ is then any vector that does not belong to any of those finitely many hyperplanes.

Only a small modification is needed in order to achieve the density of $(x_j)_{j<\omega}$. Indeed, let $(U_j)_{j<\omega}$ be an enumeration of a basis for the topology of $X$, where each $U_j$ is not empty. Since non-empty open sets cannot be covered by finitely many hyperplanes, we can, at each step, choose $x_j\in U_j$, which assures that $(x_j)_{j<\omega}$ is dense in $X$.
\end{proof}

A simple combination of the above argument and the proof of the main existence result (Theorem \ref{T: existence under CH}) allows us to extend the above fact to the non-separable context. We thus have the following result.

\begin{theorem}[CH] Let $X$ be a Banach space with ${\rm dens}\,X={\rm dens}\,X^*=\omega_1$. Then, $X$ contains a dense overcomplete set.
\end{theorem}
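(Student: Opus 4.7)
The plan is to carry out the same transfinite construction as in Theorem \ref{T: existence under CH}, but at each step also force the chosen vector to lie in a prescribed basic open set, in the spirit of the finite-dimensional Fact.

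First, exactly as in the opening paragraph of the proof of Theorem \ref{T: existence under CH}, the assumptions $\dens X^{*}=\omega_{1}$ and CH imply $|X^{*}|=\mathfrak{c}=\omega_{1}$, so the collection of all hyperplanes of $X$ can be listed as $(H_{\alpha})_{\alpha<\omega_{1}}$. Moreover, since $\dens X=\omega_{1}$, the topology of $X$ admits a base $(U_{\alpha})_{\alpha<\omega_{1}}$ of non-empty open sets (for instance, open balls with rational radii centred at the points of a fixed dense subset of cardinality $\omega_{1}$).

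Next, I would construct by transfinite induction an injective family $(x_{\gamma})_{\gamma<\omega_{1}}\subseteq X$ satisfying, for every $\gamma<\omega_{1}$,
\begin{equation*}
x_{\gamma}\in U_{\gamma},\qquad x_{\gamma}\notin H_{\alpha}\ \text{for every}\ \alpha<\gamma.
\end{equation*}
Suppose the vectors $(x_{\beta})_{\beta<\gamma}$ have already been selected. The set to be avoided is
\begin{equation*}
\bigcup_{\alpha<\gamma}H_{\alpha}\cup\bigcup_{\beta<\gamma}\{x_{\beta}\},
\end{equation*}
which, since $\gamma<\omega_{1}$, is a countable union of closed nowhere dense subsets of $X$ (every hyperplane is a proper closed subspace, hence nowhere dense, and singletons are nowhere dense because $X$ is infinite-dimensional). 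By the Baire category theorem this union cannot cover the non-empty open set $U_{\gamma}$, so a suitable $x_{\gamma}$ can be picked.

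Finally, $S:=\{x_{\gamma}:\gamma<\omega_{1}\}$ is as required. It is dense in $X$ because every non-empty open subset of $X$ contains some $U_{\gamma}$, and therefore contains $x_{\gamma}\in S$. Overcompleteness follows by the very same argument used at the end of the proof of Theorem \ref{T: existence under CH}: every hyperplane of $X$ appears as some $H_{\alpha}$ and hence contains at most $|\alpha|\leq\omega$ many elements of $S$ (namely, at most those $x_{\beta}$ with $\beta\leq\alpha$); thus no uncountable subfamily of $S$ can be contained in any proper closed hyperplane, and $S$ is linearly dense together with all of its subsets of cardinality $\omega_{1}$.

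I do not foresee a genuine obstacle: the argument is a clean amalgamation of the two proofs already in the paper. The only point requiring any care is checking that at each inductive stage the set to avoid is meagre in $U_{\gamma}$, which is immediate from Baire category once one recalls that $\gamma$ is a countable ordinal and $X$ is infinite-dimensional.
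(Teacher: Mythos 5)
Your proposal is correct and follows essentially the same route as the paper: enumerate the hyperplanes via CH as in Theorem \ref{T: existence under CH}, enumerate a base of non-empty open sets, and at stage $\gamma$ use Baire category to pick $x_\gamma\in U_\gamma$ avoiding the countably many earlier hyperplanes (and points). The density and overcompleteness arguments you give are exactly those of the paper's proof.
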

\begin{proof} Let $(U_{\alpha})_{\alpha<\omega_1}$ be an enumeration of a basis for the topology of $X$, where each open set $U_\alpha$ is not empty. As in Theorem \ref{T: existence under CH}, we enumerate the hyperplanes of $X$ as $(H_{\alpha})_{\alpha <\omega_1}$. Since open subsets of Banach spaces can not be covered by countably many hyperplanes, by the Baire category theorem, we can pick, for each $\alpha<\omega_1$, a vector $x_{\alpha}\in U_{\alpha}$ such that $x_{\alpha}\notin H_{\beta}$, whenever $\beta<\alpha$. Therefore, we obtain a long injective sequence $(x_{\alpha})_{\alpha<\omega_1}$ with $x_\alpha\in U_\alpha$ ($\alpha<\omega_1$) and such that every hyperplane contains at most countably many elements of $(x_{\alpha})_{\alpha<\omega_1}$, as desired.
\end{proof}

On the other hand, it is also possible to adapt the proof of Theorem \ref{T: existence under CH} to obtain bounded overcomplete sets that are separated sets, as we do below. Of course, such a phenomenon is impossible in the separable case.
\begin{proposition}[CH] Let $X$ be a Banach space with ${\rm dens}\,X={\rm dens}\,X^*=\omega_1$. Then, for every $\e>0$, $B_X$ contains a $(1-\e)$-separated overcomplete set.
\end{proposition}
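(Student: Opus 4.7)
The plan is to adapt the transfinite induction of Theorem \ref{T: existence under CH}, carrying it out inside the unit sphere $S_X\subseteq B_X$ and building in the separation by means of Riesz's lemma. Exactly as in the proof of Theorem \ref{T: existence under CH}, CH together with $\dens X^*=\omega_1$ give $|X^*|=\mathfrak{c}=\omega_1$, so every closed hyperplane of $X$ has the form $\ker f_\alpha$ for some nonzero $f_\alpha\in X^*$, and the collection of all such hyperplanes can be enumerated injectively as $(H_\alpha)_{\alpha<\omega_1}$.

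The construction then produces, by transfinite induction, a long sequence $(x_\beta)_{\beta<\omega_1}\subseteq S_X$ satisfying, for every $\beta<\omega_1$:
\begin{romanenumerate}
\item $x_\beta\notin H_\alpha$ whenever $\alpha<\beta$;
\item $d(x_\beta,Y_\beta)>1-\e$, where $Y_\beta:=\overline{\rm span}\{x_\gamma\}_{\gamma<\beta}$.
\end{romanenumerate}
Condition (ii) gives $\|x_\beta-x_\gamma\|>1-\e$ for every $\gamma<\beta$, hence the set is $(1-\e)$-separated (and in particular injective, of cardinality $\omega_1=\dens X$); condition (i), exactly as in Theorem \ref{T: existence under CH}, ensures that every closed hyperplane contains at most countably many $x_\beta$'s, which makes $\{x_\beta\}_{\beta<\omega_1}$ overcomplete. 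At stage $\beta$, $Y_\beta$ is separable and hence a proper closed subspace of the non-separable space $X$; thus Riesz's lemma makes
$$A_\beta:=\{x\in S_X\colon d(x,Y_\beta)>1-\e\}$$
nonempty, and it is clearly open in $S_X$. For each $\alpha<\beta$ the set $S_X\cap H_\alpha$ is nowhere dense in $S_X$: if a relative neighbourhood of some $x_0\in S_X\cap H_\alpha$ lay inside $H_\alpha$, then for any $y$ with $f_\alpha(y)\neq 0$ and $t>0$ sufficiently small the point $(x_0+ty)/\|x_0+ty\|$ would lie in that neighbourhood, and hence in $H_\alpha$, while $f_\alpha((x_0+ty)/\|x_0+ty\|)=tf_\alpha(y)/\|x_0+ty\|\neq 0$, a contradiction. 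Applying the Baire category theorem inside the Baire space $A_\beta$ then makes $A_\beta\setminus\bigcup_{\alpha<\beta}H_\alpha$ a dense $G_\delta$ in $A_\beta$, and one picks $x_\beta$ from it.

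The main obstacle I expect is the temptation to run the argument directly inside $B_X$, simultaneously avoiding the countably many hyperplanes $H_\alpha$ and the countably many closed balls $\overline{B}(x_\gamma,1-\e)$: this is doomed, since in a non-separable Banach space the balls $\overline{B}(x_\gamma,1-\e)$ are not nowhere dense and could conceivably cover $B_X$. Passing to the sphere and handling the separation by Riesz instead of by Baire replaces the ``thick'' obstacle (the balls) by the ``thin'' one (the nowhere dense sets $S_X\cap H_\alpha$), after which the Baire scheme of Theorem \ref{T: existence under CH} goes through essentially verbatim.
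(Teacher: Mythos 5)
Your proof is correct and follows essentially the same route as the paper's: a transfinite induction that enumerates the hyperplanes under CH, uses Riesz's lemma to secure the $(1-\e)$-separation from the span of the previously chosen vectors, and a Baire category argument to dodge the countably many hyperplanes already listed. The only (cosmetic) difference is that you work on the unit sphere and must verify that $S_X\cap H_\alpha$ is nowhere dense there, whereas the paper simply takes the nonempty open set $U=\{x\in X\colon \|x\|<1,\ {\rm dist}(x,Y)>1-\e\}$ and applies Baire in $X$ itself, which spares that extra verification.
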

\begin{proof} Arguing as in the proof of Theorem \ref{T: existence under CH}, let $(H_\alpha)_{\alpha<\omega_1}$ be an enumeration of the hyperplanes of $X$. We then build a $(1-\e)$-separated long sequence $(x_\alpha)_{\alpha <\omega_1}$ such that $x_\alpha \notin H_\beta$ ($\beta<\alpha<\omega_1$). Indeed, assuming to have already built $(x_\alpha)_{\alpha <\gamma}$, for some $\gamma<\omega_1$, $Y:=\overline{\rm span}\{x_\alpha\}_{\alpha <\gamma}$ is a proper subspace of $X$ and therefore, by Riesz' lemma,
$$U:=\{x\in X\colon \|x\|<1,\; {\rm dist}(x,Y)>1-\e\}$$
is a non-empty open subset of $X$. We can then pick $x_\gamma\in U\setminus \cup_{\alpha<\gamma}H_\alpha$, which concludes the transfinite induction and, thereby, the proof.
\end{proof}

One might wonder if the above result can be improved in order to produce overcomplete sets with no cluster point also in the weak topology. It is, however, elementary to realise that this is, in general, not possible. Indeed, every WLD Banach space is Lindel\"of in the weak topology (see, \emph{e.g.}, \cite[Theorem 17.1]{KKLP}) and it is clear that every uncountable subset of a Lindel\"of topological space needs to have a cluster point. On the other hand, we do not know what happens for Banach spaces that are not WLD. For example, is it possible to find an overcomplete set in $C([0,\omega_1])$ with no weak cluster point? Moreover, we do not know if it is possible, say under CH, to construct overcomplete sets that are discrete in the weak topology.

\subsection{Compactness} 
As we already observed in the introduction, no overcomplete set in a non-separable Banach space can be relatively compact and, therefore, there is no non-separable extension to the result in \cite{FonfZanco}. (The assertion `every bounded, overcomplete set in a non-separable Banach space is relatively weakly compact' could be consistently true, since it is true precisely in every model of ZFC where non-separable Banach spaces have no overcomplete set.)

Moreover, the existence of a relatively weakly compact overcomplete set in $X$ implies that $X$ is WCG and, each bounded, overcomplete set in a reflexive Banach space is relatively weakly compact. The results of this section yield, under CH, a converse to the last two assertions and, thereby, provide a characterisation of WCG and reflexive Banach spaces in terms of overcomplete sets (see Corollary \ref{C: WCG and refl}).

\begin{proposition}
Let $X$ be a non-separable non-reflexive Banach space. If $X$ contains an overcomplete subset, then $X$ contains a bounded overcomplete subset which is not relatively weakly compact.
\end{proposition}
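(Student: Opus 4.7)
The plan is to take the given overcomplete set $S \subseteq X$ (of cardinality $\dens\,X$), first rescale it to lie inside $B_X$ while preserving overcompleteness, and then enlarge it by a countable bounded sequence that witnesses the failure of relative weak compactness.

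For the rescaling step, I would consider $S' := \{x/(1+\|x\|) \colon x \in S \setminus \{0\}\}$. The map $x \mapsto x/(1+\|x\|)$ is injective on $X \setminus \{0\}$ and sends every vector to a positive scalar multiple of itself; consequently $S' \subseteq B_X$, $|S'| = |S| = \dens\,X$, and $S'$ shares with $S$ the same family of linearly dense subsets. Hence $S'$ is a bounded overcomplete set, and I may replace $S$ by $S'$ from the outset.

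For the enlargement, I would invoke non-reflexivity in the form that $B_X$ fails to be weakly compact; by the Eberlein-\v{S}mulyan theorem this yields a sequence $(y_n)_{n<\omega} \subseteq B_X$ admitting no weakly convergent subsequence. Setting $T := S' \cup \{y_n \colon n<\omega\}$, I would verify the four required properties: $T \subseteq B_X$ is bounded; $|T| = \dens\,X$ because $|S'|$ is uncountable and only countably many vectors have been added; any $\Lambda \subseteq T$ with $|\Lambda|=|T|$ meets $S'$ in a set of cardinality $|S'|$ (its complement being at most countable) and is therefore linearly dense by overcompleteness of $S'$, whence $\Lambda$ itself is linearly dense; and finally, if $T$ were relatively weakly compact, another appeal to Eberlein-\v{S}mulyan would produce a weakly convergent subsequence of $(y_n)$, a contradiction.

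I do not foresee a genuine obstacle. The argument hinges on two elementary observations—positive rescaling and countable adjunction preserve overcompleteness, modulo trivial cardinality bookkeeping—together with the standard fact, furnished by the Eberlein-\v{S}mulyan theorem, that a non-reflexive Banach space admits a bounded sequence with no weakly convergent subsequence.
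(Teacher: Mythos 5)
Your proposal is correct and follows essentially the same route as the paper: extract a bounded sequence in $B_X$ with no weakly convergent subsequence via Eberlein--\v{S}mulian and adjoin it to a bounded overcomplete set, noting that a countable adjunction does not destroy overcompleteness. The only difference is that you make explicit the rescaling $x\mapsto x/(1+\|x\|)$ producing a bounded overcomplete set from an arbitrary one, a step the paper's proof takes for granted.
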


\begin{proof} Since the unit ball of $X$ is not relatively weakly compact, we can pick a sequence $(x_n)_{n<\omega} \subseteq B_{X}$ which has no weakly convergent subsequences. By assumption, there exists a bounded overcomplete subset $(z_{\alpha})_{\alpha<\omega_1}$ in $X$. The set $Z=(x_n)_{n<\omega}\cup(z_{\alpha})_{\alpha<\omega_1}$ is then a bounded overcomplete subset of $X$ which is not relatively weakly compact. 
\end{proof}

\begin{theorem}[CH] Let $X$ be a WCG Banach space with ${\rm dens}\,X=\omega_1$; then, $X$ contains a relatively weakly compact overcomplete set.
\end{theorem}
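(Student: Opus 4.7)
The plan is to mimic the transfinite construction of Theorem~\ref{T: existence under CH} while keeping every chosen vector inside a fixed weakly compact subset of $X$. Since $X$ is WCG, it is in particular WLD, so under CH we have $|X^*|=\omega_1$ (as in the WLD corollary above), and the hyperplanes of $X$ can be enumerated injectively as $(H_\alpha)_{\alpha<\omega_1}$. Moreover, the WCG property supplies a weakly compact M-basis $\{x_\alpha;f_\alpha\}_{\alpha<\omega_1}$, so that
\[
K:=\overline{\text{conv}}\bigl(\{x_\alpha\}_{\alpha<\omega_1}\cup\{0\}\bigr)
\]
is weakly compact by Krein's theorem and linearly dense in $X$. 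The aim is to select the overcomplete set entirely inside $K$.

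The crucial lemma is that $K$ is not covered by countably many hyperplanes. Given non-zero functionals $g_1,g_2,\dots\in X^*$, the WLD assumption gives that each $\supp g_n$ (relative to the M-basis) is countable, so $S:=\bigcup_n\supp g_n$ is a countable subset of $\omega_1$; enumerate it as $S=\{\alpha_k\}_{k<\omega}$ and consider the continuous linear operator $\phi\colon\ell_1(\omega)\to X$ given by $\phi((c_k)):=\sum_k c_k x_{\alpha_k}$. Let $\Delta:=\{c\in\ell_1:c_k\geq0,\;\sum_k c_k\leq 1\}$, a closed and hence Baire subset of $\ell_1$, and observe $\phi(\Delta)\subseteq K$. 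For every $n$ the set $B_n:=\{c\in\Delta:g_n(\phi(c))=0\}$ is closed in $\Delta$; since $g_n\neq0$ and $\supp g_n\subseteq S$, the fundamentality of the M-basis forces $g_n(x_{\alpha_k})\neq0$ for some $k$, and a standard convexity argument (if $c^*\in\operatorname{int}_\Delta B_n$, moving $c^*$ slightly towards the basis vector $e_k\in\Delta$ stays in $\Delta$ but exits $B_n$) shows each $B_n$ to be nowhere dense. Baire's theorem therefore produces $c\in\Delta\setminus\bigcup_n B_n$, whence $\phi(c)\in K\setminus\bigcup_n\ker g_n$.

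With the lemma in hand, a transfinite induction selects, for every $\beta<\omega_1$, a vector $y_\beta\in K\setminus\bigcup_{\alpha<\beta}H_\alpha$ distinct from all previously chosen $y_\gamma$: the forbidden set is a countable union of hyperplanes together with countably many points, while the lemma ensures that $K\setminus\bigcup_{\alpha<\beta}H_\alpha$ is even residual in $\phi(\Delta)$, hence uncountable. Exactly as in Theorem~\ref{T: existence under CH}, by construction each hyperplane $H_\alpha$ contains $y_\beta$ only when $\beta\leq\alpha$, and hence meets $\{y_\beta\}_{\beta<\omega_1}$ in a countable set; this forces every uncountable subfamily of $\{y_\beta\}$ to be linearly dense, so $\{y_\beta\}_{\beta<\omega_1}$ is overcomplete. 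Being contained in the weakly compact $K$, it is relatively weakly compact, as required. The main obstacle is precisely the lemma on $K$: once the WLD property reduces the problem to the separable $\ell_1$-simplex $\Delta$, Baire category finishes the job, and everything else is a direct enhancement of the proof of Theorem~\ref{T: existence under CH}.
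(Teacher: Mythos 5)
Your proposal is correct and follows the same overall strategy as the paper: reduce to Theorem \ref{T: existence under CH} by exhibiting a linearly dense, weakly compact convex set that cannot be covered by countably many hyperplanes, and then run the transfinite selection inside it. The difference lies in how you prove that key covering claim. The paper's argument is shorter and needs less structure: it takes $C=\overline{\rm conv}(K)$ for \emph{any} linearly dense weakly compact $K$, observes that $C$ is a closed bounded convex subset of $X$ and hence a complete metric space in the norm, applies the Baire category theorem directly in $C$, and uses the same convexity trick you use (sliding an interior point of $C\cap H_{j_0}$ towards a point of $C\setminus H_{j_0}$) to rule out a trace with nonempty relative interior. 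Your route instead invokes the Amir--Lindenstrauss weakly compact M-basis, the WLD countable-support property of functionals, and a transfer to the simplex $\Delta\subseteq\ell_1$ in order to manufacture a separable Baire space; all of this is correct but unnecessary, since Baire's theorem does not require separability and $C$ itself already serves. One small imprecision: in the inductive step you argue that $K\setminus\bigcup_{\alpha<\beta}H_\alpha$ is ``residual in $\phi(\Delta)$, hence uncountable'' to avoid the previously chosen points; residuality lives naturally in $\Delta$, not in its (possibly non-injective) image, so the cardinality conclusion as stated is shaky. The clean fix, implicit in the paper's proof of Theorem \ref{T: existence under CH}, is to note that each already-chosen $y_\gamma$ lies in some hyperplane, absorb these countably many hyperplanes into the family being avoided, and apply your lemma to the enlarged (still countable) family.
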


\begin{proof} Since $X$ is a WCG Banach space, we can pick a  linearly dense, weakly compact subset $K$ of $X$. By Krein's theorem, the set $C:=\overline{\rm conv}(K)$ is also weakly compact. Therefore, it is sufficient to find an overcomplete set for $X$ that is contained in $C$.
In light of Theorem \ref{T: existence under CH}, this amounts to proving the following claim.

\begin{claim}\label{Claim WCG} The set $C$ cannot be covered by countably many hyperplanes.
\end{claim}
\begin{proof}[Proof of Claim \ref{Claim WCG}] Assume, by contradiction, that there exists a sequence $(H_j)_{j<\omega}$ of hyperplanes such that $C= \cup_{j<\omega}(C \cap H_j)$. By the Baire category theorem, there exists $j_0<\omega$ such that $C \cap H_{j_0}$ has nonempty interior in $C$. This is, however, impossible. Indeed, let $x$ be an interior point in $C \cap H_{j_0}$ and let $y\in C\setminus H_{j_0}$ (such $y$ exists as $C$ is linearly dense, hence it cannot be contained in a hyperplane). Then, the sequence $((1-2^{-n})x+2^{-n}y)_{n<\omega}$ belongs to $C\setminus H_{j_0}$ and it converges to $x$, contrary to the assumption that $x$ was an interior point of $C \cap H_{j_0}$.
\end{proof}\end{proof}

As direct consequence of the results of this section, we obtain, still under CH, the following characterisation of WCG and reflexive Banach spaces of density $\omega_1$ by means of overcomplete sets.
\begin{corollary}[CH]\label{C: WCG and refl} Let $X$ be a Banach space with ${\rm dens}\,X={\rm dens}\,X^*=\omega_1$. Then
\begin{romanenumerate}
    \item $X$ is WCG if and only if it contains a relatively weakly compact overcomplete set;
    \item $X$ is reflexive if and only if every bounded overcomplete set is relatively weakly compact.
\end{romanenumerate}
\end{corollary}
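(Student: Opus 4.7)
The plan is to dispatch the two equivalences by combining the WCG-existence theorem just proved with the non-reflexive dichotomy proposition and the basic existence result (Corollary \ref{C: existence under CH}). Both non-trivial directions are already essentially present in the section; only routine wiring is required.

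For part (i), the forward implication is precisely the theorem that immediately precedes this corollary. For the converse, I would argue as follows. Suppose $S\subseteq X$ is a relatively weakly compact overcomplete set, and let $K:=\overline{S}^{\,w}$. Then $K$ is weakly compact, and since $S\subseteq K$ is overcomplete we have
\[
\overline{\mathrm{span}}(K)\supseteq\overline{\mathrm{span}}(S)=X,
\]
so $K$ is a linearly dense weakly compact subset of $X$, which means $X$ is WCG by definition.

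For part (ii), the forward implication is trivial: if $X$ is reflexive then every bounded subset of $X$ is relatively weakly compact, in particular every bounded overcomplete set is. For the converse, I would argue by contrapositive. Assume $X$ is not reflexive. Since $\dens X=\dens X^*=\omega_1$, Corollary \ref{C: existence under CH} supplies an overcomplete subset of $X$. As $X$ is non-separable and non-reflexive, the preceding proposition then produces a bounded overcomplete subset of $X$ which fails to be relatively weakly compact. This contradicts the hypothesis that every bounded overcomplete set be relatively weakly compact, and therefore $X$ must be reflexive.

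I do not expect any genuine obstacle here: the whole content of the corollary is the assembly of results obtained just before. The one small point to check carefully is the converse in (i), where one must verify that the weak closure of a relatively weakly compact overcomplete set is still linearly dense; this is immediate from $S\subseteq\overline{S}^{\,w}$ together with the definition of overcompleteness. Everything else is a direct invocation of the previous theorem, the previous proposition, and Corollary \ref{C: existence under CH}.
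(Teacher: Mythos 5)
Your proposal is correct and takes essentially the same route as the paper, which states the corollary as a ``direct consequence of the results of this section'' and leaves exactly this assembly implicit: the preceding theorem for (i)$\Leftarrow$ is your (i)$\Rightarrow$, the observation that a linearly dense relatively weakly compact set witnesses WCG for (i)$\Leftarrow$, and Corollary \ref{C: existence under CH} plus the non-reflexive proposition for (ii). Nothing is missing.
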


\section{Overcomplete sequences in normed spaces}\label{S: normed spaces}
In this final section, we shall return to the separable framework and present some further results on overcomplete sequences; in particular, we shall focus on existence and properties of overcomplete sequences in separable normed spaces.

It is apparent that Klee's argument requires the completeness of the space in order to assure the convergence of the geometric series. On the other hand, there exists an alternative way to build overcomplete sequences that only involves `finitely supported' vectors and, as such, is valid for every separable normed space (see, \emph{e.g.}, \cite[Theorem 2.1.2]{GurariyLusky} or \cite[p. 59]{SingerII}, where it is given credit to Helmut Bra\ss, \cite{Brass}). Therefore, we have the following result.
\begin{proposition}[Bra\ss, \cite{Brass}] Every separable normed space contains an overcomplete sequence.
\end{proposition}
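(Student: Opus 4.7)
My plan is to emulate Klee's geometric-vector construction, but to replace the genuine geometric vectors $g_\lambda$ by \emph{finitely supported} truncations, so that the completeness of $X$ is never invoked. First I would fix a dense sequence $(v_k)_{k=1}^\infty\subseteq X$ consisting of unit vectors, together with a sequence of pairwise distinct scalars $(\lambda_n)_{n=1}^\infty\subseteq (0,1)$ with $\lambda_n\to 0$, for instance $\lambda_n:=1/(n+1)$, and then set
$$x_n \;:=\; \sum_{k=1}^{n}\lambda_n^{\,k-1}\,v_k \qquad (n\geq 1).$$
Each $x_n$ is a \emph{finite} linear combination of elements of $X$, so it is a well-defined vector of $X$ irrespective of completeness.

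The main claim to establish is that every infinite subsequence $(x_{n_j})_{j<\omega}$ of $(x_n)_{n<\omega}$ is linearly dense in $X$. By the Hahn--Banach theorem this reduces to showing that any $f\in X^*$ vanishing on every $x_{n_j}$ must vanish on each $v_k$, which by density of $(v_k)$ will force $f=0$. I would argue by contradiction: assuming $f(v_k)\neq 0$ for some $k$, let $m$ be the smallest such index. For every $j$ with $n_j\geq m$ the identity $f(x_{n_j})=0$ reduces to $\sum_{k=m}^{n_j}\lambda_{n_j}^{\,k-1}f(v_k)=0$; dividing by $\lambda_{n_j}^{\,m-1}\neq 0$ and isolating the leading term yields the estimate
$$|f(v_m)| \;\leq\; \|f\|\sum_{k=m+1}^{n_j}\lambda_{n_j}^{\,k-m} \;\leq\; \|f\|\cdot\frac{\lambda_{n_j}}{1-\lambda_{n_j}}.$$
Since $(n_j)_{j<\omega}$ is an infinite subsequence of $\omega$, $n_j\to\infty$ and hence $\lambda_{n_j}\to 0$, so the right-hand side tends to $0$ and $f(v_m)=0$, a contradiction.

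The only obstacle that this construction is designed to sidestep is precisely the one that restricts Klee's original argument to Banach spaces, namely the convergence of an infinite series of vectors of $X$. In the present scheme every $x_n$ is a finite sum in $X$, so nothing that requires completeness enters the definition; the only infinite series that appears is the scalar-valued geometric tail bounding $|f(v_m)|$, whose convergence is automatic. Consequently the argument goes through verbatim in every separable normed space, whether complete or not.
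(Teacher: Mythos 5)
Your proof is correct and is essentially the finitely supported geometric construction of Bra\ss\ that the paper cites for this proposition (and itself reproduces, in the Remark at the end of Section \ref{S: normed spaces}, in the form $g_k=y_k+\sum_{j\leq k}\lambda_k^{\,j+1}x_j$ with $\lambda_k\searrow 0$). One small wording fix: a sequence of unit vectors cannot be dense in $X$; what you need, and what your argument actually uses, is a normalised \emph{linearly dense} (fundamental) sequence $(v_k)_{k<\omega}$, which exists in every separable normed space.
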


It is then natural to ask which properties overcomplete sequences in normed spaces need to have. In particular, it seems reasonable to speculate that the result by Fonf and Zanco \cite{FonfZanco} on the relative compactness of bounded overcomplete sequences depends on the completeness of the space under consideration. (This question was asked to us by Prof.~Zanco \cite{Zanco}.) We show that this intuition is correct, by proving that the result by Fonf and Zanco is false in every incomplete, separable normed space.
\begin{theorem} Every separable, incomplete normed space contains a bounded overcomplete sequence that is not relatively compact.
\end{theorem}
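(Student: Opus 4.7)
Let $\widetilde{X}$ denote the completion of $X$ and fix, once and for all, a vector $z \in \widetilde{X} \setminus X$ (which exists by incompleteness). Since $X$ is dense in $\widetilde{X}$, a sequence contained in $X$ is overcomplete in $X$ if and only if it is overcomplete in $\widetilde{X}$. Hence it suffices to construct a bounded sequence $(x_n) \subseteq X$ that is overcomplete in $\widetilde{X}$ and converges, in $\widetilde{X}$, to $z$: any such sequence is not relatively compact in $X$, as no subsequence can converge to a point of $X$ (the unique limit in the Hausdorff space $\widetilde{X}$ lies outside $X$).

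\textbf{Auxiliary construction.} Fix $\mu = 1/3$ and $\nu := \mu^2$. We first build a uniformly bounded, linearly dense sequence $(w_k)_{k \geq 0} \subseteq X$ satisfying $\sum_{k \geq 0} \mu^k w_k = z$ in $\widetilde{X}$. Let $(d_k) \subseteq B_X$ be dense in $B_X$ and put $w_{2k} := d_k$; the remaining terms $w_{2k+1} \in X$ are chosen so that $\mu\sum_{k \geq 0} \nu^{k} w_{2k+1}$ equals the prescribed vector $z - \sum_k \nu^k d_k \in \widetilde{X}$. This is achieved by a telescoping greedy scheme: writing $Z' := \mu^{-1}(z - \sum_k \nu^k d_k)$, one picks $u_0 \in X$ with $\|u_0 - Z'\| \leq \nu$, and then inductively defines $Z'_{k+1} := \nu^{-1}(Z'_k - u_k) \in \widetilde{X}$ and picks $u_{k+1} \in X$ with $\|u_{k+1} - Z'_{k+1}\| \leq \nu$. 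Since $\|Z'_k\| \leq 1$ for $k \geq 1$, this delivers a bounded sequence $(u_k)$, and by telescoping $Z' = \sum_k \nu^k u_k$. Setting $w_{2k+1} := u_k$ completes the construction.

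\textbf{The overcomplete sequence.} Set $\lambda_n := \mu + 1/n$ (for $n$ large enough that $\lambda_n \in (0, 1/2)$) and define
\[
x_n := \sum_{k=0}^{n} \lambda_n^k w_k \in X.
\]
Straightforward tail estimates show that $(x_n)$ is bounded (by $\sup_k\|w_k\|/(1-\lambda_n)$) and that $x_n \to z$ in $\widetilde{X}$. For overcompleteness, let $A \subseteq \mathbb{N}$ be infinite and suppose $f \in \widetilde{X}^*$ annihilates each $x_n$ with $n \in A$. The power series $F(\lambda) := \sum_{k \geq 0} \lambda^k f(w_k)$ is analytic on $|\lambda| < 1$, and the same tail estimate gives $|F(\lambda_n)| \leq C \mu^n$ for every $n \in A$. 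Expanding $F$ in Taylor series at $\mu$, if $F \not\equiv 0$ let $j_0$ be its order of vanishing at $\mu$ with leading coefficient $a_{j_0}\neq 0$; then $|F(\lambda_n)| \sim |a_{j_0}| |\lambda_n - \mu|^{j_0} = |a_{j_0}| n^{-j_0}$ as $n \to \infty$ in $A$, contradicting the exponential bound $C \mu^n$. Hence $F \equiv 0$, so $f(w_k) = 0$ for every $k$, and by the linear density of $(w_k)$ we conclude $f = 0$.

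\textbf{Main obstacle.} The delicate point is that Klee's classical identity-theorem argument in the complete case requires $F(\lambda_n) = 0$ \emph{exactly}, which presupposes that the full series $\sum_k \lambda^k w_k$ converges in the space; in our incomplete setting this limit typically leaves $X$, forcing us to truncate. The truncation error is exponentially small ($\asymp \mu^n$) while the natural approach rate $|\lambda_n - \mu| = 1/n$ is only polynomial, and it is precisely this contrast that lets the approximate vanishing of $F$ at $\mu$ imitate a zero of infinite order, thereby killing every Taylor coefficient.
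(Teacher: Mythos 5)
Your proof is correct, and it takes a genuinely different route from the paper's. The paper follows Bra{\ss}: it fixes a normalised complete sequence $(x_n)$, approximants $y_k\to y\in\widehat X\setminus X$ with $\|y_k-y\|<1/k!$, sets $g_k=y_k+\sum_{n\le k}(n+2)^{-k}x_n$, and proves overcompleteness by inductively recovering $\langle e^*,x_j\rangle=0$ one index at a time, each step resting on the fact that the $j$-th coefficient dominates the later ones and the approximation error. You instead run a genuine Klee-type argument: you first arrange a bounded linearly dense $(w_k)\subseteq X$ with $\sum_k\mu^kw_k=z\notin X$ (the interleaving of a dense sequence in $B_X$ with a greedy expansion of the residual is the extra work this costs you, and it checks out — the telescoping keeps $\|Z_k'\|\le1$, so $(u_k)$ is bounded and the rearranged absolutely convergent series does sum to $z$), and then take truncated geometric vectors $x_n=\sum_{k\le n}\lambda_n^kw_k$ with $\lambda_n=\mu+1/n$. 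The key mechanism is your ``approximate identity principle'': $F(\lambda)=\sum_kf(w_k)\lambda^k$ is exponentially small ($\lesssim\mu^n$, since $\lambda_n^{n+1}\le e^{2/\mu}\mu^{n+1}$) at points approaching $\mu$ only polynomially fast, which forces $F$ to vanish to infinite order at $\mu$ and hence identically. This is a clean quantitative substitute for Klee's exact-zeros argument, and it is strictly more in Klee's spirit than the geometric variant sketched in the paper's closing remark (which takes $\lambda_k\searrow0$ and still relies on the inductive peeling). What the paper's proof buys is elementary, self-contained estimates; what yours buys is a single reusable analytic lemma and a construction that folds the escape to $z\in\widetilde X\setminus X$ directly into the geometric series rather than adding the approximants $y_k$ as a separate summand. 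All the supporting steps you need — boundedness ($\|x_n\|\le 2\sup_k\|w_k\|$ since $\lambda_n\le1/2$), convergence $x_n\to z$, non-compactness from $z\notin X$, and the identification of linear density and of dual spaces between $X$ and $\widetilde X$ — are verified or immediate.
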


\begin{proof}
Let $\widehat{X}$ denote the completion of $X$ and let $(x_n)_{n<\omega}$ be a normalised complete sequence in $X$ (which is, evidently, also complete for $\widehat{X}$). Let $y\in \widehat{X}\setminus X$ and pick a sequence $(y_k)_{k<\omega} \subseteq X$ convergent to $y$ in such a way that $\|y_k - y\|<1/k!$. Set
\begin{equation}\label{eq: g_k overcomplete for normed}
   g_k= y_k + \sum_{n=0}^{k}(n+2)^{-k}x_n \in X \qquad(k<\omega).
\end{equation}
Let us first show that the sequence $(g_k)_{k<\omega}$ converges to $y$. Indeed,
\begin{equation*}
    \|y - g_k\| \leq \|y_k-y\| + \sum_{n=0}^{k}(n+2)^{-k}\leq \|y_k-y\| + \textstyle{\frac{k+1}{2^{k}}}\to0.
\end{equation*}

Since $(g_k)_{k<\omega}$ is, in particular, bounded, it remains to prove that it is overcomplete in $X$. Suppose, on the contrary, that there exists a subsequence $(g_{k_i})_{i<\omega}$ and a functional $e^*\in X^*$ such that $\langle e^*,g_{k_i}\rangle=0$ for each $i<\omega$. Since $(g_{k_i})_{i<\omega}$ converges to $y$, we have $\langle e^*,y\rangle=0$. Moreover, for every $i<\omega$, (\ref{eq: g_k overcomplete for normed}) allows us to write
\begin{equation*}
    x_0=2^{k_i}\left(g_{k_i}-y_{k_i}- \sum_{n=1}^{k_i} (n+2)^{-k_i}x_n \right).
\end{equation*}
Therefore, we get
\begin{equation*}
\begin{split}
    |\langle e^*,x_0\rangle|&\leq 2^{k_i} |\langle e^*,y_{k_i}-y \rangle|+ \sum_{n=1}^{k_i}\textstyle{(\frac{2}{n+2})^{k_i}} |\langle e^*,x_n\rangle|\\
    &\leq 2^{k_i}\|e^*\|\|y_{k_i}-y\| + \textstyle{\left(\frac{2}{3}\right)^{k_i}}k_i\|e^*\|\\
    & \leq \|e^*\|\textstyle{\frac{2^{k_i}}{k_i!}}+ \textstyle{\left(\frac{2}{3}\right)^{k_i}}k_i\|e^*\|.
    \end{split}
\end{equation*}
Since this holds for any $i<\omega$, we deduce $\langle e^*,x_0\rangle=0$. Suppose now, by induction, that for each $p<j$ we have $\langle e^*,x_p\rangle=0$. As above, from (\ref{eq: g_k overcomplete for normed}) we obtain, for every $i<\omega$ with $k_i>j$,
\begin{equation*}
    x_j=(j+2)^{k_i}\left(g_{k_i}-y_{k_i}- \sum_{n=0}^{j-1} (n+2)^{-k_i}x_n - \sum_{n=j+1}^{k_i} (n+2)^{-k_i}x_n \right).
\end{equation*}
The inductive assumption and $\langle e^*,y\rangle=0$ then yield us 
\begin{equation*}
   \begin{split}
    |\langle e^*,x_j\rangle|&\leq (j+2)^{k_i} |\langle e^*, y_{k_i}-y\rangle| + \sum_{n=j+1}^{k_i} \textstyle{\left(\frac{j+2}{n+2}\right)^{k_i}}|\langle e^*,x_n\rangle|\\
    &\leq \|e^*\|\textstyle{\frac{(j+2)^{k_i}}{k_i!}} + \textstyle{\left(\frac{j+2}{j+3}\right)^{k_i}}k_i\|e^*\|.
    \end{split}
\end{equation*}
Since the last inequality holds for every $i<\omega$, we get $\langle e^*,x_j\rangle=0$. It follows that $\langle e^*,x_n\rangle =0$ for each $n<\omega$, which is equivalent, by the completeness of $(x_n)_{n<\omega}$, to $e^*=0$. Therefore, the sequence $(g_n)_{n<\omega}$ is an overcomplete sequence in $X$, that is not relatively norm compact in $X$, as desired.
\end{proof}

\begin{remark} The above proof is based on a modification of Bra\ss' argument for constructing overcomplete sequences. It is easy to see that the argument can be modified by using geometric sequences, more in the spirit of Klee's proof. Indeed, let $(x_n)_{n<\omega}$ be a normalised complete sequence for a normed space $X$ and let $(\lambda_n)_{n<\omega}\subseteq(0,1)$ be a decreasing sequence with $\lambda_n\searrow0$. Pick $y\in\widehat{X}\setminus X$ and a sequence $(y_n)_{n<\omega}\subseteq X$ such that $\|y_n-y\|\to0$ sufficiently fast. More precisely, we require that, for every $j<\omega$,
\begin{equation*}
    \frac{1}{(\lambda_n)^j} \|y_n-y\|\to0 \;\quad \text{as}\;\; n\to\infty.
\end{equation*}
Then, the same argument as in the previous proof easily shows that the vectors
\begin{equation*}
    g_k:=y_k+ \sum_{j=0}^k (\lambda_k)^{j+1} x_j \qquad (k<\omega)
\end{equation*}
form an overcomplete sequence in $X$, with $g_k\to y$.
\end{remark}

In conclusion of our article, let us present a proof of the aforementioned result \cite[Theorem 2.1]{FonfZanco} that every bounded overcomplete (or, more generally, almost overcomplete) sequence in a Banach space is relatively compact. Let us recall that a sequence is \emph{almost overcomplete} in $X$ if the closed linear span of every its subsequence has finite codimension in $X$.

The argument below is only a minor modification of the one in \cite{FonfZanco}, but we present it because it seems conceptually simpler (in particular, it does not use the LUR renormability of separable Banach spaces and the two cases of their article) and it allows us to distill the following lemma, that was only implicit in \cite{FonfZanco}.

\begin{lemma} Let $(x_n)_{n<\omega}$ be an almost overcomplete sequence in a normed space $X$. If $x_n\to x$ weakly, then it converges in norm.
\end{lemma}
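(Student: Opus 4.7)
The plan is to argue by contradiction: if $x_n\to x$ weakly but not in norm, I extract a basic subsequence from the differences $x_n-x$ and use that structure to exhibit an infinite subset $A\subseteq\omega$ such that $\overline{\rm span}\{x_n\}_{n\in A}$ has infinite codimension in $X$, contradicting almost overcompleteness.

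To set up, after passing to a subsequence I may assume $\|x_n-x\|\geq\delta>0$ for every $n$. The sequence $y_n:=x_n-x$ is then weakly null and bounded away from zero; since $X^*=\widehat X^*$ isometrically, the same holds in the completion $\widehat X$. Applying the Bessaga--Pe\l{}czy\'nski selection principle inside $\widehat X$, and observing that basicness of a sequence contained in $X$ is unaffected by passing to $\widehat X$, I extract a further subsequence (and rename) so that $(y_n)_{n<\omega}$ is a basic sequence in $X$. Let $(y_n^*)_{n<\omega}\subseteq X^*$ denote Hahn--Banach extensions of the associated biorthogonal functionals, so that $y_n^*(y_m)=\delta_{nm}$.

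Now fix any partition $\omega=A\sqcup B$ into two infinite sets, and set $V:=\overline{\rm span}\{x_n\}_{n\in A}$ and $W:=\overline{\rm span}\{y_n\}_{n\in A}$. For every $n\in B$, the functional $y_n^*$ annihilates $\{y_m\}_{m\in A}$ by biorthogonality, hence annihilates $W$. Since $\{y_n^*\}_{n\in B}$ is a linearly independent family in $X^*$, the annihilator $W^\perp$ is infinite-dimensional; via the standard isometric identification $(X/W)^*\cong W^\perp$, the quotient $X/W$ is infinite-dimensional, that is, $W$ has infinite codimension in $X$. Because $x_n=y_n+x$, one has $V\subseteq W+\R x$, and $W+\R x$ is a closed subspace of $X$ (a finite-dimensional extension of a closed subspace) which still has infinite codimension in $X$; therefore $V$ also has infinite codimension in $X$, contradicting the almost-overcompleteness hypothesis applied to the subsequence $(x_n)_{n\in A}$.

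The main delicate point is the invocation of Bessaga--Pe\l{}czy\'nski for a sequence living in the possibly incomplete space $X$, which is resolved by the completion argument above; all remaining steps reduce to purely algebraic manipulations with the biorthogonal functionals $(y_n^*)$.
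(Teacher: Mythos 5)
Your proof is correct and takes essentially the same route as the paper's: pass to a subsequence with $\|x_n-x\|\geq\delta$, extract a basic subsequence from the differences $y_n=x_n-x$, and note that the closed span of half of it has infinite codimension, a fact that survives the translation back by $x$. The paper leaves implicit both the passage to the completion (needed to invoke the selection principle in a possibly incomplete $X$) and the biorthogonal-functional count of the codimension; your treatment of these details is accurate.
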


\begin{proof} Assume, by contradiction, that there exist $\e>0$ and a subsequence $(x_{n_k})_{k<\omega}$ such that $\|x_{n_k}-x\|\geq\e$ for $k<\omega$. Therefore, up to a further subsequence, we can assume that $(x_{n_k} -x)_{k<\omega}$ is a basic sequence, whence ${\rm codim}\,\overline{\rm span}\{x_{n_{2k}}-x\}_{k<\omega}= \infty$. It readily follows that ${\rm codim}\,\overline{\rm span}\{x_{n_{2k}}\}_{k<\omega}= \infty$, a contradiction.
\end{proof}

\begin{corollary}[\cite{FonfZanco}] Every bounded almost overcomplete sequence $(x_n)_{n<\omega}$ in a Banach space $X$ is relatively compact.
\end{corollary}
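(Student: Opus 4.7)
The overall strategy is to use the preceding lemma to reduce the corollary to the extraction of a weakly convergent subsequence. Since every subsequence of an almost overcomplete sequence is itself almost overcomplete, proving the relative norm compactness of $(x_n)_{n<\omega}$ amounts to showing that every subsequence admits a weakly convergent further subsequence in $X$; the lemma then upgrades weak convergence to norm convergence, yielding the conclusion.

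First I would rule out $\ell_1$-subsequences. If $(x_{n_k})_{k<\omega}$ were equivalent to the canonical $\ell_1$ basis, then inside $\overline{\text{span}}\{x_{n_k}\}_{k<\omega}\simeq\ell_1$ the sub-subsequence $(x_{n_{2k}})_{k<\omega}$ would span a subspace of infinite codimension (as the even-indexed unit vectors of $\ell_1$ do), hence of infinite codimension in $X$, contradicting almost overcompleteness. Rosenthal's $\ell_1$-theorem then guarantees that every subsequence of $(x_n)_{n<\omega}$ admits a weakly Cauchy further subsequence $(x_{n_k})_{k<\omega}$, with weak$^*$ limit some $\phi\in X^{**}$.

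The heart of the matter is to show that $(x_{n_k})_{k<\omega}$ is in fact norm Cauchy: completeness of $X$ then places its norm limit in $X$, this limit automatically coincides with $\phi$, and the lemma (together with the ensuing weak convergence) supplies norm convergence. I would try to mimic the halving argument of the lemma applied to differences: if $(x_{n_k})_{k<\omega}$ is not norm Cauchy, one may extract indices with $\|x_{n_{2k}}-x_{n_{2k+1}}\|\geq\e$ for all $k$, obtaining a weakly null and $\e$-separated sequence $z_k:=x_{n_{2k}}-x_{n_{2k+1}}$; Bessaga--Pe\l czy\'nski provides a basic further subsequence, and halving gives $\overline{\text{span}}\{z_{2k}\}_{k<\omega}$ of infinite codimension in $X$. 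The identity $x_{n_{4k}}=z_{2k}+x_{n_{4k+1}}$ coupled with the finite codimension of $\overline{\text{span}}\{x_{n_{4k+1}}\}_{k<\omega}$ (afforded by almost overcompleteness) should then allow one to push this infinite-codimensional deficit onto a genuine subsequence of $(x_n)_{n<\omega}$, producing the desired contradiction.

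The main obstacle is precisely this last transfer: converting the infinite-codimensionality of the span of the differences $(z_k)_{k<\omega}$ into an infinite-codimensionality of the span of an actual subsequence of $(x_n)_{n<\omega}$. The bookkeeping should take place in the finite-dimensional quotient $X/\overline{\text{span}}\{x_{n_{4k+1}}\}_{k<\omega}$, where $(\bar z_{2k})_{k<\omega}$ coincides with $(\bar x_{n_{4k}})_{k<\omega}$, constraining how much of the basic structure of $(z_{2k})_{k<\omega}$ can survive after quotienting. Once the contradiction is secured, $(x_{n_k})_{k<\omega}$ is norm Cauchy, hence norm convergent, and running the argument over every subsequence of the original sequence establishes relative norm compactness.
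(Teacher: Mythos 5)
Your overall reduction is the same as the paper's --- extract a weakly convergent subsequence and let the lemma upgrade weak convergence to norm convergence --- but the route through Rosenthal's theorem leaves a genuine gap at exactly the point you flag. The problem is the ``transfer'' step. Suppose the weakly Cauchy subsequence $(x_{n_k})_{k<\omega}$ is not norm Cauchy and you form the weakly null, separated, basic (after a further extraction) differences $z_k:=x_{n_{2k}}-x_{n_{2k+1}}$. You correctly obtain that $Z:=\overline{\mathrm{span}}\{z_{2k}\}_{k<\omega}$ has infinite codimension in $X$. But the identity $x_{n_{4k}}=z_{2k}+x_{n_{4k+1}}$ only yields $\overline{\mathrm{span}}\{x_{n_{4k}}\}_{k<\omega}\subseteq Z+Y$, where $Y:=\overline{\mathrm{span}}\{x_{n_{4k+1}}\}_{k<\omega}$; since $Y$ already has finite codimension by almost overcompleteness, so does $Z+Y$, and no contradiction arises. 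The inclusion points the wrong way: an infinite-codimensional $Z$ is perfectly compatible with every subsequence of $(x_n)_{n<\omega}$ spanning a finite-codimensional subspace. Passing to the quotient $X/Y$ does not help either, since that space is finite-dimensional and the images of the $z_{2k}$ trivially span a finite-dimensional subspace there. Consequently the case where the weak$^*$ limit $\phi$ lies in $X^{**}\setminus X$ (equivalently, where the weakly Cauchy subsequence fails to converge weakly) is not actually handled.

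The repair is to avoid differences altogether: a bounded sequence with no weakly convergent subsequence admits a basic subsequence (\cite[Theorem 1.5.6]{ak}, which is what the paper invokes; in your setting the relevant special case is that a non-trivially weakly Cauchy sequence has a basic subsequence). A basic subsequence $(x_{m_k})_{k<\omega}$ immediately contradicts almost overcompleteness, because $\overline{\mathrm{span}}\{x_{m_{2k}}\}_{k<\omega}$ has infinite codimension in $\overline{\mathrm{span}}\{x_{m_k}\}_{k<\omega}$, hence in $X$. With that substitution your argument closes: either some subsequence converges weakly, and the lemma finishes, or some subsequence is basic, which is absurd; your preliminary exclusion of $\ell_1$-subsequences is then subsumed by the same observation and is not needed as a separate step. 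This is precisely the paper's proof.
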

\begin{proof} Since every subsequence of $(x_n)_{n<\omega}$ is almost overcomplete, it suffices to prove that $(x_n)_{n<\omega}$ admits a convergent subsequence. Notice first that $(x_n)_{n<\omega}$ is relatively weakly compact, because, if not, it would admit a basic subsequence (see, \emph{e.g.}, \cite[Theorem 1.5.6]{ak}), a contradiction. Therefore, by the Eberlein--\v{S}mulian theorem, $(x_n)_{n<\omega}$ admits a weakly convergent subsequence, which, by the above lemma, converges in norm.
\end{proof}

{\bf Acknowledgements.} We are most grateful to Bence Horv\'{a}th for suggesting us the proof of Claim \ref{Claim WCG}; our previous argument was unnecessarily complicated.


\end{document}